\newcommand{\sbc}{\on{sbc}}
\newcommand{\Flat}{\on{Flat}}
\newcommand{\Asai}{\on{Asai}}
\newcommand{\sInd}{\on{SI}}
\theoremstyle{definition}
\newcommand{\X}{\mathbf{X}}
\newcommand{\un}{{\on{un}}}
\newcommand{\unr}[1]{\X_{#1,\un}}
\newcommand{\s}{{\on{sp}}}
\newcommand{\II}{{I\!\!I}}
\newcommand{\ff}[1]{f^\circ_{[#1]}}
\newcommand{\ord}{\on{ord}}
\newcommand{\ssm}{\smallsetminus}
\newcommand{\iq}[1]{\int\limits_{\quo{#1}}}
\newtheorem{thm}[equation]{Theorem}
\newtheorem{cor}[equation]{Corollary}
\newtheorem{prop}[equation]{Proposition}
\newtheorem{lem}[equation]{Lemma}
\newtheorem{rmk}[equation]{Remark}
\numberwithin{equation}{section}
\newcommand{\wt}{\widetilde}
\newcommand{\bpm}{\begin{pmatrix}}
\newcommand{\epm}{\end{pmatrix}}
\newcommand{\bsm}{\begin{smallmatrix}}
\newcommand{\esm}{\end{smallmatrix}}
\newcommand{\bspm}{\left(\begin{smallmatrix}}
\newcommand{\espm}{\end{smallmatrix}\right)}
\newcommand{\bm}{\begin{matrix}}
\renewcommand{\em}{\end{matrix}}
\newcommand{\bbm}{\begin{bmatrix}}
\newcommand{\ebm}{\end{bmatrix}}
\newcommand{\bs}{\backslash}
\newcommand{\C}{\mathbb{C}}
\newcommand{\G}{\mathbb{G}}
\newcommand{\A}{\mathbb{A}}
\newcommand{\Z}{\mathbb{Z}}
\newcommand{\Ad}{\operatorname{Ad}}
\newcommand{\Gal}{\operatorname{Gal}}
\newcommand{\diag}{\operatorname{diag}}
\newcommand{\der}{\operatorname{der}}
\newcommand{\Ind}{\operatorname{Ind}}
\newcommand{\Fr}{\operatorname{Fr}}
\newcommand{\la}{\langle}
\newcommand{\ra}{\rangle}
\newcommand{\ve}{\varepsilon}
\newcommand{\vph}{\varphi}
\newcommand{\on}{\operatorname}
\newcommand{\ol}{\overline}
\renewcommand{\Re}{\on{Re}}
\newcommand{\Res}{\on{Res}}
\newcommand{\gm}{\gamma}
\newcommand{\quo}[1]{#1(F)\bs #1(\A)}
\renewcommand{\c}{\mathcal}
\newcommand{\f}{\mathfrak}
\begin{document}
\author{Joseph Hundley} \email{jhundley@math.siu.edu}
\address{Math. Department, Mailcode 4408,
Southern Illinois University Carbondale, 1245 Lincoln
Drive Carbondale, IL 62901}

\title{Holomorphy of adjoint $L$ functions for quasisplit $A_2$}
\date{\today}
\maketitle
\tableofcontents

\section{Introduction}
Let $F$ be a number field\footnote{It seems that most 
of these results should also hold in the function field 
case.}. Let $G$ be a quasisplit $F$-group, isomorphic 
to $SL_3$ over $\ol F.$ Thus, $F$ is either $SL_3,$
or a quasisplit unitary group attached to some 
quadratic extension  $E/F.$ It's $L$-group is then 
$PGL_3(\C)$ in the split case, 
or the semidirect product of $PGL_3(\C)$ with $\Gal(E/F)$
in the nonsplit case.
In either case we have an action of 
$^LG$ on $PGL_3(\C)$ by conjugation, which may be regarded as an action on $GL_3(\C)$ which fixes the 
center. This then induces an action on the Lie 
algebra $\f{sl}_3(\C)$ which we denote $\Ad.$ 
Note that in the nonsplit 
case this does not coincide with the 
definition of 
$\Ad$ in \cite{su21adjoint}. Rather, the 
nontrivial element $\Fr$
of $\Gal(E/F)$ will act by $X \mapsto -_tX,$
as this is the differential of its action on $PGL_3(\C).$
Let $\Ad'$ denote the representation 
of $^LG$ considered in \cite{su21adjoint}, where
$\Fr$ acts by $X \mapsto \,{}_tX.$

Let $\pi$ be a globally generic irreducible cuspidal 
automorphic 
representation of $G.$ 
Then we consider the adjoint $L$ function 
$L(s, \pi, \Ad).$ 
One would like understand the poles of this $L$ function. 
We discuss an attack based on 
the integral representation given 
in \cite{SL3Adjoint}, \cite{su21adjoint}
and a strengthening the 
results of \cite{Ginzburg-Jiang}. 
Our proof also applies to certain twisted $L$ 
functions.

We briefly review the local zeta integral 
for $L(s, \pi, \Ad)$ presented in
 \cite{SL3Adjoint}, \cite{su21adjoint}.
 First, one fixes 
  an embedding of $G$ 
into the split exceptional group of type $G_2.$
Let $P$ be the maximal standard parabolic subgroup of $G_2,$ 
whose Levi contains the root subgroup attached to the short root.
Let 
$f$ be a flat section of the family of induced representations 
attached to a family of characters of $P(F)\bs P(\A),$ and 
$E_P$ be the corresponding Eisenstein series operator 
(these notions are reviewed in sections \ref{sec: Flat Sections} and \ref{sec:Eis ser}).

One may then define
\begin{equation}
\label{eq: def of I(s, vph, f)}
I(s, \varphi, f) := 
\iq{G} \vph(g) E_P.f(g,s) \, dg, 
\end{equation}
 where $s \in \C,$ and 
$\varphi$ is a generic cusp form $\quo{G} \to \C.$ 
Both  \cite{SL3Adjoint}, and \cite{su21adjoint}
present the argument under the assumption that the characters
are unramified, but the extension to the general case is direct 
so we may regard the theorem as proved in the 
ramified case as well. 
The integral $I(s, \varphi, f)$ unfolds to a new integral 
$I(s, W_{\varphi}, f)$ where $W_{\varphi}$ is the Whittaker
function attached to $\varphi.$ 
Assuming that $W_{\varphi}$ and $f$ are factorizable, 
$I(s, W_{\varphi}, f)$ then factors as a product 
of local zeta integrals
$I(s, W_v, f_v).$ 
Here $W_v$ and $f_v$ are the local components at a place
$v$
of $W_{\varphi}$ and $f$ respectively.
Moreover, 
if $W_v,f_v$ and $\chi_v$ are all unramified,
and $W_v$ and $f_v$ are normalized,  
then 
$$I(s, W_v, f_v) = \frac{L(3s-1, \pi_v, \Ad'\otimes \chi_v)}{L(3s, \chi_v)L(6s-2, \chi_v^2) L(9s-3, \chi_v)}.\qquad (\text{See note}\footnote{
This corrects a typo which appears in \cite{su21adjoint}. In that paper, $\zeta(3s-9)$ appears where $\zeta(9s-3)$ 
would be correct.})
$$ 
Hence, 
\begin{equation}
\label{global zeta int = L(Ad)/Norm fact x finite prod}
I(s, \varphi, f)
= \frac{L^S(3s-1, \pi, \Ad'\otimes \chi)}{L^S(3s, \chi)L^S(6s-2, \chi^2) L^S(9s-3, \chi)} \prod_{v \in S} I(s, W_v, f_v),
\end{equation}
where $S$ is a finite set of places, away from which 
$W_v, f_v$ and $\chi_v$ are unramified.
Notice that $L(s, \pi, \Ad) = L(s, \pi, \Ad')$ 
if $G$ is split and 
$L(s, \pi, \Ad' \otimes \chi_{E/F}),$
where $\chi_{E/F}$ is the quadratic character 
attached to the extension $E/F$ by class field 
theory if $G$ is not split. 

One expects that in general the $L$ function $L(s, \pi, \Ad \otimes \chi)$ 
should be entire. 
Indeed, in the split case  
$L(s, \pi, \Ad\times \chi ) = L(s, \pi \otimes \chi \times \wt \pi)/L(s,\chi),$ and it follows that the possible 
poles are precisely the zeros of $L(s, \chi),$ unless $\chi$ is nontrivial and  $\pi \otimes \chi \cong \wt \pi,$
in which case there are additional simple poles at $s=0$ and $s=1.$ One expects that
$L(s, \pi \otimes \chi \times \wt \pi)$ is divisible by $L(s, \chi)$ and hence that 
the only actual poles are the  simple poles at $s=0$ and $s=1$ which occur when 
 $\chi$ is nontrivial and  $\pi \otimes \chi \cong \wt \pi.$ In the special 
 case when $\pi = \otimes'_v \pi_v$ and at least one component $\pi_v$
 is supercuspidal, this was proved by Flicker
 \cite{Flicker}.

In the nonsplit case, one must replace $L(s, \pi \times \wt \pi)$ with the Asai $L$ function of the stable 
base change lifting of $\pi$. One must also account for the image of certain theta liftings. 
Indeed, consider 
\begin{equation}
\label{GL2 x GL2 --> GL3}
\left \{ 
\bpm a& &b \\ &t& \\ c&&d \epm 
\right \} \cong GL_2(\C) \times GL_1(\C) \subset GL_3(\C)
\end{equation}
This subgroup is stable under the outer automorphism 
which realizes the action of the nontrivial Galois element of the $L$ group. Thus we obtain a subgroup
of the $L$ group.
This subgroup may be realized the image of an $L$-homomorphism from 
 the $L$ group of a product of smaller unitary groups $U_{1,1} \times U_1$
(still attached to the same quadratic extension). 
See \cite{RogawskiBook}.
The above subgroup clearly stabilizes the one dimensional 
subspace of $\f{sl}_3(\C)$ spanned by $\diag(1, -2, 1),$ and so does the map $X \mapsto \ _t\!X:=J\ ^t\!X J.$ \
Thus we obtain a one dimensional space stable under the restriction of the representation $\Ad'$ to $^L(U_{1,1} \times U_1),$ 
so that $L(s, \pi, \Ad')$ should have a pole whenever $\pi$ is a lift from $U_{1,1} \times U_1.$ 
(In the split case, this issue does not arise because an element of the image of the corresponding lifting can never be cuspidal). 

An approach to controlling poles of these $L$ functions, 
which is based on the study  of $I(s, \varphi ,f)$ and does 
not depend on any property of a local component of $\pi$ was 
pioneered in \cite{Ginzburg-Jiang}.
 If $I(s, \varphi, f)$ has a pole at $s=s_0$
then each of its negative Laurent coefficients is a global integral,  
similar to $I(s, \varphi, f),$ but with the 
Eisenstein series replaced by its corresponding negative Laurent coefficient. 
Thus, it suffices to show that the negative Laurent coefficients of the 
Eisenstein series 
used in $I(s, \varphi, f),$
when restricted to the subgroup $G \hookrightarrow 
G_2$ 
are ``orthogonal''\footnote{``Orthogonal'' is in quotes because the residues need 
not be $L^2.$ But one can extend the inner product
on $L^2(G(F) \bs G(\A))$ to allow pairing cusp forms
with arbitrary smooth functions of moderate growth.} to cusp forms. Following \cite{Ginzburg-Jiang}, this can be done 
by expressing such Laurent coefficients in terms of 
Eisenstein series 
induced from characters of the {\it other} maximal 
parabolic subgroup of $G_2,$ 
and then checking that the restrictions of 
these Eisenstein series 
are ``orthogonal'' to $G$-cuspforms.
In 
\cite{Ginzburg-Jiang}, 
it is shown that the Eisenstein series appearing in the construction 
of $I(s, \varphi, f)$ for the case of trivial $
\chi$ has only two poles in $\Re(s) > \frac 12, $
with one being simple and the other double. The residue of the simple pole is 
a constant function and thus obviously orthogonal to cusp forms. 
At the double pole, a ``first term identity'' is proved, which 
expresses the leading term of the Laurent expansion in 
terms of an Eisenstein series from the other parabolic. This 
rules out a double pole of the adjoint $L$ function. In order
to rule out a simple pole by this method, one would need a ``second 
term identity.'' In 
section \ref{sec: identity of unram Eis ser} we prove an identity of this 
type. It is my understanding that such an identity was first obtained by 
Jiang in unpublished work. .

That being said, if the second pole of the Eisenstein series
gave rise to a pole of the global adjoint $L$ function 
$L(s, \pi,\Ad),$ this pole would  
occur at $s=1.$   
Such a pole is impossible, because 
$L(s, \pi, \Ad) = L(s, \pi \times \wt \pi) /\zeta(s),$ 
and  $L(s, \pi \times \wt \pi)$ and $\zeta(s)$ both have simple poles 
at $s =1.$

In this paper we pursue the approach pioneered by Ginzburg and Jiang. 
First, we analyze the poles of the Eisenstein series in the case of 
nontrivial $\chi.$ This allows us to deduce information about 
the poles of the local zeta integral.
We also prove a key vanishing result needed to 
deduce holomorphy of $L(s, \pi, \Ad \times \chi)$ 
at $\Re(s) = \frac12$ from \eqref{global zeta int = L(Ad)/Norm fact x finite prod}.
Then, we prove a weak result regarding local zeta integrals
at ramified and Archimedean primes. While preparing this
manuscript, I have learned that a stronger result-- a local 
functional equation-- has been obtained by Qing Zhang. 
The weaker result proved here suffices for our application, 
permitting us to deduce that each pole of the partial adjoint $L$ 
function must be a pole of the global zeta integral for some choice
of data. 

Our main result is theorem \ref{thm: main}, which states 
that, in the split case, every pole of $L(s, \pi, \Ad \times \chi)$ in the half plane 
$\Re(s) \ge \frac 12$ is simultaneously a zero of $L(s, \chi)$ 
and a pole of the finite product over the ramified and 
Archimedean places. 
Using this result, together with knowledge of the 
form of the Gamma factor and the zeros of the 
Riemann zeta function, Buttcane and Zhou were able to show 
holomorphy of the complete adjoint $L$ function  (and hence also all partial $L$ functions)
for an $SL(3, \Z)$ Maass form with trivial central character 
(such a form generates a representation unramified at 
all finite places).

\subsection{Acknowledgements}
Thanks to Richard Taylor for pointing out the problem with the definition of 
``Ad'' in \cite{su21adjoint}, 
to Sol Friedberg for pointing out the reference \cite{Flicker}, 
to Dihua Jiang for helpful explanation of \cite{Ginzburg-Jiang}, 
and for letting me know about his unpublished work,
to Paul Garrett 
and David Loeffler for helpful explanations on 
MathOverflow, and to Jack Buttcane, Fan Zhou, and Dorian Goldfeld
for stimulating discussions.

\section{Induced representations, interwining operators, and their poles}
\label{s: ind reps, int ops, poles}

\subsection{Characters and degenerate induced representations}
Let $F$ be a number field as before. 
If $G$ is an $F$-group,  write 
$X(G)$ for the group of rational characters
of $G$ and 
$\X_G$ for the complex manifold
 of characters of $G(\A)$ trivial on $G(F).$ 
These are groups and we write them 
additively. To reconcile with multiplicative notation for $G(\A)$ and $\C^\times$, we use an exponential
notation for the characters: the value of $\chi \in \X_G$ at $g \in G(\A)$
is denoted $g^\chi.$  A similar notation is used for cocharacters.
We identify $\chi \in X(G)$ with the character 
of $G(F) \bs G(\A)$ obtained by composing it 
with the absolute value on $\A^\times.$ 
This extends to a mapping of $X(G) \otimes_\Z \C$ 
into $\X_G.$ The image is the set of unramified 
characters, which we denote $\unr{G}.$
Similarly we denote the complex manifold
of all characters of $G(F_v)$ by 
$\X_{G,v}$ and the image of 
$X(G) \otimes_\Z\C$ in it by $\X_{G,v,\un}.$ 
We identify $\X_{GL_1,\un}$ with 
$\C$ using the map $s \to |\ |^{s}.$
If $\varphi^\vee$ is a cocharacter, then 
$\la \varphi^\vee, \chi \ra \in \X_{GL_1}.$

We shall only require split connected reductive $F$-groups
 with simply connected derived groups.
We always assume that each is equipped with a choice of split torus and Borel containing it.
The torus is denoted $T$ and the Borel $B.$
Let $G$ be such a group and $M$ a standard Levi. 
Then we may identify $\X_M$ with $\{ \chi \in \X_T : \la \chi, \alpha^\vee \ra  = 0, \alpha \in \Phi(T,M)\}.$
Likewise, we may identify $\X_{M,\un}$ 
with $\{ \chi \in \X_{T,\un} : \la \chi, \alpha^\vee \ra  = 0, \alpha \in \Phi(T,M)\}.$

We would like to choose a complement 
$\X_{G,0}$ 
to $\X_{G,\un}$ in $\X_G.$ 
When $G=GL_1$ this is done by 
taking the normalized characters, 
i.e., those that are trivial on the multiplicative group 
of positive reals, embedded diagonally at all the infinite 
places.
When $G$ is a torus, it can be identified with several
copies of $GL_1$ by choosing a $\Z$-basis
for $X(G)$ and the subgroup $\X_{G,0}$ 
thus obtained is independent of the choice. 
If $G  = G_{\der} T$ 
where $G_{\der}$ is the derived group and 
$T$ is a torus, 
then restriction gives 
an embedding $\X_G \hookrightarrow \X_T,$
and we may apply the decomposition 
$\X_T = \X_{T,\un} \oplus \X_{T,0}$ to 
obtain the corresponding decomposition of 
$\X_G.$

For archimedean local fields, we again define a character
to be normalized if it is trivial on the positive
reals.
For nonarchimedean fields, we first choose 
a uniformizer and then say that a character
is normalized if it is trivial on the uniformizer.
This leads to 
 similar decompostions
$\X_{G,v} = \X_{G,v,\un} \oplus \X_{G,v,0}$ 
 into unramified characters and normalized
 characters. For any character $\chi,$
 we define $\chi_{\un}$ and $\chi_0$ 
 to be the components relative to this decomposition.
If $\chi = s + \chi_0 \in \X_{GL_1}$ (resp. $\X_{GL_1, v}$)
we define $L(\chi)$ to be the usual global 
(resp. local) $L$ function $L(s, \chi_0)$ 
(keeping in mind that $\X_{GL_1, \un}$ has 
been identified with $\C$).

For $H$ a $T$-stable $F$-subgroup we write $\Phi(T,H)$ for the roots of $T$ in $H.$
The Weyl group is denoted $W.$ It is realized as a quotient of the normalizer, $N_G(T),$ of $T$ in $G.$ 
We also assume $G$ equipped with a realization, i.e.
a  family of isomorphisms
$\{ x_{\alpha}: \G_a \to U_\alpha \} _{\alpha \in \Phi(G,T)},$
such that  
$x_{\alpha}(1)x_{-\alpha}(-1) x_{\alpha}(1) \in N_G(T)$
for each $\alpha.$  This product is then a representative
for the simple reflection attached to $\alpha$ and 
one may select representatives for other elements
of the Weyl group using them.

Take $P$ a parabolic with Levi $M$ and $\chi \in \X_M.$
Write $\rho_P = \frac 12 \sum_{\alpha \in \Phi(P,T)} \alpha.$ 
We define $I_P^G(\chi)$ to be the normalized $K$-finite induced 
representation of $G(\A)$ and $\Ind_P^G(\chi)$ to be the non-normalized version.
For $\chi \in \X_{M,v}$ we define 
$I_P^G(\chi)$ to be the normalized $K_v$-finite
induced representation of $G(F_v)$ and $\Ind_P^G(\chi)$ the non-normalized version. 
In either case,  $I_P^G(\chi) = \Ind_P^G( \chi + \rho_P),$ and  
 is a subset of $I_B^G(\chi + \rho_P - \rho_B).$
In the important special case when the Levi of $P$ is rank one with unique root $\alpha,$
this becomes $I_B^G( \chi - \frac \alpha2).$ 

\subsection{Flat Sections}\label{sec: Flat Sections}
Fix a reductive group $G$ and standard Levi $M,$
and a normalized character $\chi_0 \in \X_M.$
We consider the family of induced representations
$I_P^G(\chi)$ with $\chi$ in $\chi_0+ \unr M.$
Denote the family as a whole by $\c I_P^G(\chi_0).$ 
By a section
 we mean a function 
$$\chi \mapsto f_\chi, \qquad (\chi \in \chi_0+ \unr M)$$
such that (1) $f_\chi \in I_P^G(\chi)$ for each $\chi\in \chi_0+  \unr M,$ 
(2) $f$ is smooth as a function $X_M \times G \to \C.$
We say that $f$ is flat if $f( \chi_0+s, k)$ is independent
of $s \in \X_{M, \un}$ for $k \in K.$ 
(Here $K$ is a fixed maximal compact subgroup.)  
The set of flat sections of $\c I_P^G(\chi_0)$ is a complex vector 
space. Denote it $\Flat(\chi_0).$

\subsection{Coordinates on $\X_T$ in the case of $G_2$}
Our main results deal with induced representations on the split 
exceptional group $G_2.$ 
I write $\alpha$ for the short root and $\beta$ for the long root. 
{\it Unfortunately, this is the opposite of the notation used in \cite{Ginzburg-Jiang}.}
I write $U_\gm$ for the root subgroup attached to any root $\gm.$
I assume $G_2$ to be equipped with a choice of Borel and of maximal torus. 
These are $B$ and $T.$ 
I write $P=MU$ for the standard parabolic subgroup whose Levi contains $U_\alpha$
and $Q=LV$ for the one whose Levi contains $U_\beta.$
For $\chi_1, \chi_2 \in \X_{GL_1}$ let 
$[\chi_1, \chi_2]$ denote the element of $\X_T$ which satisfies
$$
\la [\chi_1, \chi_2],\  \alpha^\vee \ra = \chi_1,\qquad
\la [\chi_1, \chi_2],\  \beta^\vee \ra = \chi_2.
$$
Thus $\varpi_1:=[1,0]$ and $\varpi_2:=[0,1]$ are the two fundamental weights.
Note that $[\chi_1, \chi_2] \in \X_M \iff \chi_1 = 0$ 
and $[\chi_1, \chi_2] \in \X_L \iff \chi_2 = 0.$

\subsection{Normalization and poles of intertwining operators: $GL_2$ case}
\label{ss: normalization and poles of int ops GL2 case}
We study poles of intertwining operators. The theory is fairly uniform for split groups, 
and reduces to the special case of $GL_2.$ 
First we consider the case of $GL_2.$ 
Write $B_{GL_2}$ for the standard Borel of $GL_2$ consisting of upper triangular matrices.
Take $\chi = \prod_v \chi_v$ a character of $B_{GL_2}(\A).$
 Let $w$ be the unique nontrivial element 
of the Weyl group, and $\alpha$ the unique positive root.
\begin{lem}
The normalized local intertwining operator
$$
M^\star_v(w, \chi_v):=
\frac{1}{L_v(\la \chi_v , \alpha^\vee \ra)}
M_v(w, \chi)$$
extends analytically to all of $\X_T.$
When $\la \chi_v \circ \alpha^\vee \ra$ is unramified 
$I_{B_{GL_2}}^{GL_2}(\chi_v)$ 
has a normalized ``spherical''\footnote{By ``spherical'', we mean fixed by 
the intersection of $SL_2(F_v)$ 
with the maximal compact subgroup.}
 vector which we denote $f^\circ_{\chi_v}.$
 Then 
 $$M_v(w, \chi_v).f^\circ_{\chi_v} = \frac{L(\la \chi_v, \alpha^\vee \ra)}{L(\la \chi_v, \alpha^\vee \ra + 1 )}f^\circ_{w \chi_v}.$$ 
\end{lem}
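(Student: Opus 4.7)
The plan is to work directly from the integral definition of the intertwining operator. In a suitable right half-plane of the space of characters, $M_v(w,\chi_v)$ is given by the absolutely convergent integral $(M_v(w,\chi_v)f)(g) = \int_{F_v} f(w^{-1}x_\alpha(u)g)\,du$, and elsewhere by meromorphic continuation. Since $M_v(w,\chi_v)$ commutes with the action of $K_v$ and sends the spherical line of $I_B^{GL_2}(\chi_v)$ into that of $I_B^{GL_2}(w\chi_v)$, in the unramified case it acts by a scalar on this line. The second assertion thus reduces to computing $(M_v(w,\chi_v)f^\circ_{\chi_v})(1)$.

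For that computation I would carry out the standard Gindikin--Karpelevich partition of the integration domain. The piece over $\{|u|_v\le 1\}$ lies in $K_v$ (with Haar measure normalized appropriately) and contributes the constant $1$. For the piece over $\{|u|_v>1\}$, an elementary Iwasawa decomposition of $w^{-1}x_\alpha(u)$ produces a diagonal part on which the extended character $\chi_v+\rho_B$ takes the value $|u|_v^{-\la\chi_v,\alpha^\vee\ra-1}$, so the remaining integral is a one-variable Tate zeta integral. Summing the resulting geometric series gives $L(\la\chi_v,\alpha^\vee\ra)/L(\la\chi_v,\alpha^\vee\ra+1)$ in the nonarchimedean case, and a formally identical ratio of $\Gamma$-factors in the archimedean case. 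This proves the second assertion and, since $L(\la\chi_v,\alpha^\vee\ra)$ has no zeros, already shows that $M^\star_v(w,\chi_v)$ is holomorphic on the spherical line for all $\chi_v$.

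For the full analytic extension to all of $\X_T$, one passes to the compact picture, realizing $I_B^{GL_2}(\chi_v)$ on the $\chi_v$-independent space of sections over $K_v$. The operator $M_v(w,\chi_v)$ then acts on each $K_v$-type by a meromorphic scalar, computed by the same kind of explicit integration as above; in each $K_v$-type the answer is a product of an entire factor and $L(\la\chi_v,\alpha^\vee\ra)/L(\la\chi_v,\alpha^\vee\ra+1)$, so the normalization by $1/L(\la\chi_v,\alpha^\vee\ra)$ cancels every pole. The main obstacle is carrying out this $K_v$-type bookkeeping for ramified $\chi_v$ of arbitrary conductor; in the $GL_2$ setting this is classical and may be referenced rather than reproved. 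At archimedean places one either performs the analogous decomposition under $O(2)$ or $U(2)$ explicitly, or appeals to the Knapp--Stein theory of normalized intertwining operators.
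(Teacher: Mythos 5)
Your proposal is essentially the same as the paper's, which also just observes that both assertions follow from direct computation (Gindikin--Karpelevich partition of the Bruhat cell for the spherical normalization, $K_v$-type bookkeeping for the holomorphy of $M^\star_v$) and then defers to the classical literature --- Winarsky and Langlands' \emph{Euler Products} in the nonarchimedean case, and Bump, Gindikin--Karpelevich, Wallach, Shahidi (equivalently, Knapp--Stein theory) over $\R$ and $\C$. One small remark worth noting: when $\la\chi_v,\alpha^\vee\ra$ is \emph{ramified}, the local $L$-factor is identically $1$, so the normalization does nothing and the assertion is simply that $M_v(w,\chi_v)$ itself is holomorphic there; the $K_v$-type bookkeeping you flag as the ``main obstacle'' is thus the case where there is nothing to cancel, and the genuine cancellation only occurs on the unramified line where your spherical computation already handles it directly.
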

\begin{proof}
In the nonarchimedean case, both assertions can be verified by fairly direct computations. Alternatively, the 
first assertion is a special case of a result of Winarsky, \cite{Winarsky}, and the second assertion is a special 
case of the result in section 4 of Langlands, {\it Euler products} \cite{eulerproducts}. 
Over the reals, both assertions can be deduced from proposition 2.6.3 of \cite{Bump-GreyBook}.
The second assertion is also the simplest case of the formula of Gindikin and Karpalevic \cite{Gindikin-Karpalevic}, 
first proved for $GL_n$ by Bhanu Murti \cite{BhanuMurti}. 
Over the complex numbers, both assertions follow from Lemma 7.23 of \cite{Wallach-corvallis}. See also 
\cite{Garrett} and additional references therein.
The first assertion over either archimedean field also follows from the 
generalization found on p. 110 of \cite{Shahidi}.
\end{proof}
Consequently if $f = \prod_{v \notin S} f_{\chi_v}^\circ \prod_{v\in S} f_v $
is an element of the global induced space $I_{B_{GL_2}}^{GL_2}(\chi)$, then 
$$
M(w, \chi).f = \frac{L(\la \chi, \alpha^\vee \ra) }{L(\la \chi, \alpha^\vee \ra+1) }\prod_{v \notin S} f_{\chi_v}^\circ \prod_{v\in S}  L_v(\la \chi_v, \alpha^\vee \ra+1) M_v^\star(w, \chi_v) f_v.
$$
We deduce that poles of $M(w, \chi)$ come in three classes
\begin{enumerate}
\item Poles of $\prod_{\alpha> 0, \ w\alpha < 0} L(\la \chi, \alpha^\vee \ra),$
which are not cancelled by poles of $\prod_{\alpha> 0, \ w\alpha < 0} L(\la \chi, \alpha^\vee \ra+1),$
These occur at $\la \chi, \alpha^\vee \ra =1,$ and in particular do not 
occur unless $\la \chi, \alpha^\vee \ra _0=0.$
\item Zeros of $L(\la \chi, \alpha^\vee \ra+1).$ These are all in the strip $-1< \Re \la \chi, \alpha^\vee \ra _{\un} <0.$
\item Poles of $\prod_{\alpha> 0, \ w\alpha < 0} \prod_{v\in S}L_v(\la \chi_v, \alpha^\vee \ra+1).$
These are all in the half plane $\Re \la \chi, \alpha^\vee \ra_{\un} \le -1.$ 
\end{enumerate}

\subsection{Poles of intertwining operators
on the principal series: general case}
Now let $G$ be a general split reductive group, $B$ 
its Borel, $\chi= \prod_v \chi_v$ a character of $B(\A),$ and $w$ 
any element of the Weyl group. For each root $\alpha$
we have a map $SL_2  \to G$ and can decompose the 
standard intertwining operator 
$M(w, \chi)$ as a composite of intertwining operators indexed by 
$\{ \alpha > 0 : w\alpha < 0 \}.$   Poles of the intertwining operator 
attached to a root $\alpha$ are of the same three types, along 
hyperplanes $\la \alpha^\vee, \chi \ra = c$ in the space $\X_{B, \un}$ 
defined using the corresponding coroot.

\subsection{Eisenstein series}\label{sec:Eis ser}
Now $G$ is a reductive group and $P$ a parabolic subgroup.  
We fix a suitable maximal compact subgroup 
$K = \prod_v K_v$ of $G(\A)$ and 
let $\c A(G)$ denote the space 
of automorphic forms (relative to $K$)
$G(\A) \to \C,$
that is, the space of smooth functions $\phi: \quo G \to \C$ 
of moderate growth which are finite under the action of 
$K$ and the center, $\f z_G$ of the universal enveloping algebra 
of the Lie algebra of $G(F_\infty).$ 
 
Fix $\chi_0 \in \X_{M,0}$ and 
let $\Flat(\chi_0)$ denote the space of flat 
sections of $\c I_P^G(\chi_0).$ 
For $f \in \Flat(\chi_0),$ we define the Eisenstein series 
$$
E_P.f: G(\A) \times (\chi_0 + \X_{M,\un}) \to \C
$$
by 
$$
E_P.f(g,\chi) = \sum_{\gm \in P(F) \bs G(F) } f_\chi(\gm g)
$$
for values of $\chi$ such that this sum is convergent and by 
meromorphic continuation elsewhere. 
Outside the domain of convergence, one encounters poles of finite order along 
a locally finite set of root hyperplanes. For each $\chi$ away from the poles, 
$f \mapsto E_P.f(\cdot, \chi)$
is an intertwining operator
$I_P^G(\chi) \to \c A(G).$  We denote it 
$E_P(\chi).$

\subsection{Normalization of $G_2$ Eisenstein series} 
We briefly recall the Eisenstein series which appears in \cite{SL3Adjoint} and \cite{su21adjoint} and its 
normalization. The Eisenstein series in question are attached to the parabolic $P=MU$ as in section \ref{sec: identity of ramified Eisenstein series}.
 In \cite{SL3Adjoint} and \cite{su21adjoint}, unramified Eisenstein series are considered. The space $\unr M$ is one dimensional and can 
 conveniently be identified with $\C$ using the mapping $s \mapsto \delta_P^s.$  
 Here $\delta_P$ is the modular quasicharacter. 
 In the notation of 
 section \ref{sec: identity of ramified Eisenstein series}, $\delta_P^s = [0, 3s].$ Equivalently, the half-sum of the roots of $P$ is   $\rho_P = [0,\frac 32].$ Thus 
 $\Ind_P^G(\delta_P^s) = I_P^G([0, 3s-\frac 32]) \subset I_B^G[-1, 3s-1]).$ In order to generalize the construction of \cite{SL3Adjoint} and \cite{su21adjoint} 
 to get $L(s, \pi, \Ad' \times \chi)$ for general $\chi,$ we would use 
 $I_P^G([0, 3s- \frac 32 + \chi]),$

\subsection{Application to relevant intertwining operators for $G_2$}
We apply this to the intertwining operators that appear in the 
constant term of our $G_2$ Eisenstein series. 
Write $c(u, \chi_0) = \frac{L(u, \chi_0)}{L(u+1, \chi_0)}.$ 
For $\chi \in \X_T$ and $w \in W$ let $c(w, \chi)= \prod_{\alpha> 0, \ w\alpha < 0 } c(\la \alpha^\vee, \chi\ra).$ 
Notice that this expression appears in our formula for $M(w, \chi)$ above. 
The constant term of our Eisenstein series may be expressed as a sum over $w \in W$ such that $w\alpha > 0.$
Here $\alpha$ is the short simple root. 
There are six such $w,$
one of each length from $0$ to $5.$ 
We write $w_i$ for  the element of length $i.$ 
If $\chi = [-1,3s-1+\chi_0]$ and
$$
\c C:= (c( 3s-1, \chi_0) ,c(9s-4, \chi_0^3) ,c(6s-3, \chi_0^2) ,c(9s-5, \chi_0^3) ,c(3s-2, \chi_0)),
$$
then $c(w_i, \chi) = \prod_{j=1}^i \c C_j.$
\begin{enumerate}
\item 
Poles which arise from the pole of the zeta function at $1$ will occur 
at $2/3$ and $1$ if $\chi_0$ is trivial; at $5/9$ and $2/3$ if $\chi_0^3$ is trivial, 
and at $2/3$ if $\chi_0^2$ is trivial. This is to be interpreted additively, 
so if $\chi_0$ is trivial then the pole at $2/3$ can be a triple pole. Otherwise it 
is simple.
\item Poles which arise from zeros of a global $L$ function are in the 
half plane $\Re(s) < \frac 12.$ For example, $c(9s-5, \chi_0^3)$
could have poles as far right at $\Re(s) = 5/9-\ve,$ but it
never occurs without $c(9s-4, \chi_0^3)$ so the zeros of the $L$ function 
in its denominator are always cancelled. We only get poles from the zeros of $L(9s-3, \chi_0^3),$
and these begin at $4/9.$ 
\item 
Poles which arise from local $L$ functions are all in the half 
plane $\Re(s) \le 4/9.$
\end{enumerate}
It follows that the only possible poles of our Eisenstein series in the half plane $\Re(s) \ge 1/2$ are 
at $5/9, 2/3$ and $1,$ and can occur only if $\chi_0$ is trivial, quadratic, or cubic. 
Notice that the pole at $2/3$ would correspond to a pole of $L(s, \pi, \Ad' \times \chi)$ 
at $1.$ 
More detailed information regarding the poles of the intertwining operators
in $\Re(s) \ge \frac 12$ is recorded in the following table.
$$
\begin{array}{|c|c|c|c|}
\hline 
\chi & \text{ trivial } & \text{ nontrivial quadratic } & \text{ nontrivial cubic }\\
\hline 
w_0 & \text{ holomorphic } &  \text{ holomorphic } &  \text{ holomorphic }\\
w_1 & 2/3&  \text{ holomorphic }&  \text{ holomorphic }\\
w_2 & 2/3,  \ 5/9 &  \text{ holomorphic } & 5/9\\
w_3 & 2/3 (\text{double}), \ 5/9  & 2/3 & 5/9\\
w_4 & 2/3 (\text{triple}),\ 5/9 & 2/3 & 2/3, 5/9 \\
w_5 & 2/3 (\text{triple}),\ 5/9, \ 1 & 2/3 & 2/3, 5/9\\ \hline
\end{array}
$$
\subsection{Application to the constant
term of our $G_2$ Eisenstein series}
\label{ss: constant terms}
Now, the poles of the Eisenstein series (and their orders) are the same as 
the poles of the constant term (and their orders), which is a 
sum of intertwining operators. 
Having determined the poles of the summands, and their orders, the 
next step is to account for the possibility of cancellation in the sum.
In the unramified case this is done in \cite{Ginzburg-Jiang}.
\begin{thm}[Ginzburg-Jiang]
\label{thm: poles of EP, chi0 trivial}
Assume that $\chi_0$ is trivial. Then 
$E_P$ has a simple pole at $s=1,$ and 
a double pole at $s=2/3.$
At $s=5/9$ it is holomorphic.
\end{thm}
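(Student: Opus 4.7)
The plan is to study the constant term of $E_P.f$, whose poles and orders coincide with those of the Eisenstein series itself. By the decomposition of subsections~2.5--2.7, this constant term applied to the normalized spherical flat section is
$$
\sum_{i=0}^{5} c(w_i,\chi)\, f^\circ_{w_i\chi},
$$
and the table at the end of subsection~2.7 bounds the pole order of the sum from above by the maximum order of any single summand: simple at $s=1$ (from $w_5$), triple at $s=2/3$ (from $w_4$), and simple at $s=5/9$ (from $w_2, w_3, w_4, w_5$). The theorem asserts that after summation the actual orders drop to simple, double, and zero respectively.

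I would handle $s=1$ first: only the $w_5$ summand contributes a pole, so no cancellation is possible, and the residue is nonzero by directly evaluating the finite factors $\mathcal{C}_1(1),\ldots,\mathcal{C}_4(1)$. The substantive work is at $s=2/3$ and $s=5/9$, where I would exploit coincidences in the $W$-orbit of $\chi|_{s=s_0}$. Computing $w_i\chi$ explicitly in the $[a,b]$-coordinates on $\X_T$ at $\chi = [-1, 3s-1]$ gives $w_4\chi = w_5\chi = [-1, 0]$ at $s=2/3$, and the pairings $w_2\chi = w_5\chi = [-1, 1/3]$ together with $w_3\chi = w_4\chi = [0, -1/3]$ at $s=5/9$. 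For each paired $(w_i, w_j)$ the spherical sections $f^\circ_{w_i\chi}$ and $f^\circ_{w_j\chi}$ coincide at the special point, and the ratio $c(w_j,\chi)/c(w_i,\chi)$ is a product of intermediate $\mathcal{C}_k$ factors, one of which specializes to $c(0) = \zeta(0)/\zeta(1) = 0$ there. The paired summand therefore factors as $c(w_i,\chi)\bigl(f^\circ_{w_i\chi} + [\text{ratio}] \cdot f^\circ_{w_j\chi}\bigr)$, and the vanishing of the ratio absorbs one order of the pole once the Taylor expansion of the flat sections in $\chi$ around the coincidence point is taken into account.

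The main obstacle is the precise Laurent--Taylor bookkeeping. One must simultaneously expand $c(w_i,\chi)$ in Laurent series in $\epsilon = s-s_0$, the ratio as a Taylor series with first-order zero, and the spherical sections $f^\circ_{w_i\chi}, f^\circ_{w_j\chi}$ in Taylor series in $\chi$ around the coincidence point, then check that the cancellations yield exactly the stated order. At $s=2/3$ the single pair $\{w_4, w_5\}$ drops the triple pole to a double pole whose leading coefficient must be shown nonzero, using $\zeta(0) = -\tfrac12$ and the directional derivative of $f^\circ$ at $[-1, 0]$. At $s=5/9$ the two pairs $\{w_2, w_5\}$ and $\{w_3, w_4\}$ must each cancel their simple pole completely; the cleanest way to see this total cancellation is through a ``first-term identity'' of the type developed in \cite{Ginzburg-Jiang}, identifying the would-be residue with (the restriction to $G_2$ of) an Eisenstein series induced from the opposite maximal parabolic $Q$ at a character where that series manifestly vanishes.
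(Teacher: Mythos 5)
Your overall plan mirrors the paper's: isolate the $w_5$ term at $s=1$, and at $s=2/3$ and $s=5/9$ exploit the Weyl-orbit coincidences $w_4\chi=w_5\chi=[-1,0]$ and $\{w_2\chi=w_5\chi,\ w_3\chi=w_4\chi\}$ to reduce pole orders by pairwise cancellation. The coincidences you identify are correct. However, there is a concrete error in the cancellation mechanism at $s=2/3$, and the $s=5/9$ step is left essentially unargued.

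The error is the value $c(0)=\zeta(0)/\zeta(1)=0$. The factor $c$ in this context is the Gindikin--Karpelevich ratio of \emph{completed} $L$-functions (the product over all places, including archimedean, as in the $GL_2$ lemma of subsection~2.5), so $c(s)c(-s)=1$ and, comparing residues of the two simple poles at $0$ and $1$, $c(0)=-1$, exactly as recorded in section~3.3. Your version with $c(0)=0$ does not produce the claimed drop in pole order: if $\mathcal{C}_5(2/3)=0$, the paired sum $c(w_4,\chi)\bigl(f^\circ_{w_4\chi}+\mathcal{C}_5\,f^\circ_{w_5\chi}\bigr)$ has the same leading Laurent coefficient as $c(w_4,\chi)f^\circ_{w_4\chi}$ alone, so the triple pole survives. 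The cancellation that actually makes the order drop is $f^\circ_{w_4\chi}+\mathcal{C}_5 f^\circ_{w_5\chi}\to f^\circ_{[-1,0]}+(-1)f^\circ_{[-1,0]}=0$, i.e.\ $\mathcal{C}_5=-1+O(\epsilon)$ and the two sections agree at $\epsilon=0$. Both ingredients are needed; ``vanishing of the ratio'' is neither true nor sufficient. The paper phrases this at the operator level: $\langle\beta^\vee,w_4\chi\rangle=0$ at $s=2/3$, so by Keys--Shahidi \cite[Prop.~6.3]{Keys-Shahidi} the rank-one operator $M(s_\beta,w_4\chi)$ is the scalar $-1$, hence $M(w_4,\chi)+M(w_5,\chi)=(1+M(s_\beta,w_4\chi))\circ M(w_4,\chi)$ with the first factor vanishing at $s=2/3$. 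Your spherical-vector version of this is fine for bounding the pole of $E_P$, once $c(0)=-1$ is used.

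For $s=5/9$, the appeal to a ``first-term identity'' is not an argument: the Ginzburg--Jiang first-term identity lives at the double pole $s=2/3$ (relating the leading Laurent coefficient of $E_P$ to a $Q$-Eisenstein series) and does not a priori tell you anything about $s=5/9$; invoking an Eisenstein series ``at a character where that series manifestly vanishes'' is circular, since the vanishing of the residue of $E_P$ at $5/9$ is exactly what needs to be shown. The paper handles $s=5/9$ by the same direct mechanism as $s=2/3$: for each of the two pairs you found, one rank-one factor has pairing $0$ at $s=5/9$, hence equals the scalar $-1$ there, and each pair's contribution is the composite of a simple-pole operator with an operator vanishing at $5/9$ (this is carried out in detail in the proof of theorem~\ref{thm: poles in cubic and quadratic case} and applies verbatim with $\chi_0$ trivial). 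You should replace the first-term-identity step with this pairwise computation.
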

We sketch a proof which is slightly different than the one given in \cite{Ginzburg-Jiang}. 
The technique is similar to \cite{Hundley-Miller} and will be worked out in detail for the ramified case below. We consider the two terms which 
have triple poles at $s=2/3.$ They correspond to the Weyl elements $w_4$ and $w_5.$
We may write $M(w_5, \chi) = M(s_\beta, w_4\chi) M(w_4, \chi).$ 
Here, $s_\beta$ is the simple reflection in the Weyl group attached to the long root $\beta$ ($s_\alpha$ is defined similarly).
We check that  when $\chi_0$ is trivial, $\la \beta^\vee, w_4 \chi\ra $ vanishes at the point corresponding to $s=2/3$ 
It follows from \cite[proposition~6.3]{Keys-Shahidi}
that $M(s_\beta, w_4 \chi)$ is the scalar operator $-1$ at this point. 
Hence $M(w_4, \chi) + M(w_5, \chi)$ is equal to the composition of $M(w_4, \chi)$ 
and an operator which vanishes at $s=2/3.$ 
Similarly, the four terms which give poles at $5/9$ form two pairs such that the sum of each pair is holomorphic 
at $s=5/9.$

\begin{thm}\label{thm: poles in cubic and quadratic case}
Assume that $\chi_0$ is nontrivial quadratic or cubic.
Then $E_P$ has a simple pole at $\frac 23$ and is otherwise holomorphic in $\Re(s) \ge \frac 12.$ 
\end{thm}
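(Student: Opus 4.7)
The plan is to extend the cancellation argument sketched in Theorem~\ref{thm: poles of EP, chi0 trivial} for trivial $\chi_0$ to the ramified settings. The starting point is the constant-term expansion of $E_P.f$ along $P$ as $\sum_{i=0}^5 M(w_i,\chi).f$ over the minimal-length coset representatives $w_0,\ldots,w_5$ of $W_M\bs W$, with pole orders of the individual summands read off from the preceding table. The key tool is the cocycle identity
\[
M(w_{i+1},\chi) = M(s_{\gamma_i},w_i\chi)\,M(w_i,\chi),\qquad w_{i+1}=s_{\gamma_i}w_i,
\]
combined with \cite[proposition~6.3]{Keys-Shahidi}: whenever the coroot pairing $\langle\gamma^\vee,\chi'\rangle$ specializes to the trivial character at a candidate pole, the rank-one normalized operator $M(s_\gamma,\chi')$ equals the scalar $-1$ there, so that the consecutive pair $M(w_i,\chi)+M(w_{i+1},\chi)=(1+M(s_{\gamma_i},w_i\chi))\,M(w_i,\chi)$ acquires an extra simple zero at the pole and the combined pole order drops by one.

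For each of the two ramified cases I would compute all five pairings $\langle\gamma_i^\vee,w_i\chi\rangle$ at the candidate poles $s_0\in\{\tfrac59,\tfrac23\}$ and apply this mechanism wherever it triggers. In the cubic case at $s=\tfrac59$, the pairing $\langle\alpha^\vee,w_3\chi\rangle=9s-5+3\chi_0$ becomes the trivial character (because $\chi_0^3=1$), so $M(s_\alpha,w_3\chi)$ specializes to $-1$ and $M(w_3,\chi)+M(w_4,\chi)$ has its pole cancelled. At the same point one also has the coincidence $w_2\chi=w_5\chi$ (a direct computation using $3\chi_0=6\chi_0=0$ for cubic $\chi_0$), and the remaining pair $M(w_2,\chi)+M(w_5,\chi)$ can be rewritten via the cocycle so that the Keys--Shahidi scalar $-1$ again forces cancellation, yielding holomorphy. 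At $s=\tfrac23$ in both the quadratic and cubic cases, by contrast, every relevant coroot pairing specializes to a character whose normalized component is nontrivial (for instance $\langle\beta^\vee,w_4\chi\rangle$ specializes to $\chi_0$), so Proposition~6.3 does not apply and produces no cancellation. Since each individual summand carries at most a simple pole at $s=\tfrac23$, this yields at most a simple pole for the sum.

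It remains to verify that the simple pole at $s=\tfrac23$ is actually present, rather than accidentally vanishing. I would establish this by exhibiting explicit test data: choose $f\in\Flat(\chi_0)$ spherical away from the ramification of $\chi_0$ and suitably chosen at the ramified places so that the residues of the various $M(w_i,\chi).f$ at $s=\tfrac23$ do not conspire to cancel. The main obstacle will be this nonvanishing step: pole-counting and cocycle cancellation bound the pole order from above but give no lower bound, and verifying that no accidental cancellation occurs requires either explicit local computations at the ramified places, or--equivalently--a first-term identity expressing the leading Laurent coefficient as a nonzero Eisenstein series induced from the other maximal parabolic $Q$, analogous to the identity proved in Section~\ref{sec: identity of unram Eis ser}.
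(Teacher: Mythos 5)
Your plan uses the same basic machinery as the paper (cocycle factorization of $M(w_i,\chi)$ into rank-one pieces plus the Keys--Shahidi specialization of a rank-one operator to the scalar $-1$ when the pairing is the trivial character), and your treatment of the pair $M(w_3,\chi)+M(w_4,\chi)$ at $s=\tfrac59$ matches the paper. But there are two places where you have left real gaps that the paper's proof fills.

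First, the pair $M(w_2,\chi)+M(w_5,\chi)$ at $s=\tfrac59$. You assert that ``the Keys--Shahidi scalar $-1$ again forces cancellation,'' but $w_5 = s_\beta s_\alpha s_\beta\, w_2$ differs from $w_2$ by a \emph{length-three} Weyl element, and at $s=\tfrac59$ only the middle of the three relevant pairings, $\langle\alpha^\vee, w_3\chi\rangle$, specializes to $0$. The outer two pairings $\langle\beta^\vee, w_2\chi\rangle$ and $\langle\beta^\vee, w_4\chi\rangle$ specialize to $\pm(\chi_0-\tfrac13)$, which are neither $0$ nor $\pm1$. So Keys--Shahidi alone does not make $H(u):=M(s_\beta,w_4\chi)M(s_\alpha,w_3\chi)M(s_\beta,w_2\chi)$ specialize to $-1$; one needs the additional identity $M(s_\beta,-s_\beta\chi')=M(s_\beta,\chi')^{-1}$ to see that the two outer factors are mutually inverse at $u=0$, and only then do you get $H(0)=-1$ and the resulting zero of $1+H(u)$. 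This inverse identity is the key extra ingredient your sketch omits.

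Second, the existence of the pole at $s=\tfrac23$. You correctly observe that the pairings never hit the special values so Keys--Shahidi produces no cancellation, but you then worry about ``accidental'' cancellation and propose constructing test data or a first-term identity to rule it out — describing this, rightly, as the main obstacle. The paper dispatches this cleanly: in the table of target spaces, the characters $w_i\chi$ at $s=\tfrac23$ are pairwise distinct elements of $\X_T$ (precisely because $\chi_0$ is nontrivial — e.g.\ $w_4$ lands in $[-1,\chi_0]$ and $w_5$ in $[-1,-\chi_0]$ in the cubic case, and these differ since $2\chi_0\neq0$). Consequently the six intertwining operators map into \emph{different} induced representations, so no cancellation of any kind between summands is possible, and the pole of any one term is inherited by the constant term and then by $E_P$ itself. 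This observation entirely removes the need for test data or a second identity and is what makes the proof close.

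A smaller point: you focus on the cubic case at $s=\tfrac59$, which is correct — in the quadratic case none of the $M(w_i,\chi)$ has a pole at $\tfrac59$, so there is nothing to cancel there — but your write-up should say so explicitly, since you introduce $s=\tfrac59$ as a ``candidate pole'' for both cases.
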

\begin{proof}
We have 
$$
M(w_5, \chi) 
= M(s_\beta, w_4\chi)M(s_\alpha, w_3\chi) 
M(s_\beta, w_2\chi) M(s_\alpha, w_1\chi) 
M(s_\beta, \chi)
$$
The corresponding expression for $w_i$ with $i< 5$ is obtained by taking only the rightmost operators in this composite. 

We tabulate key data. First, the spaces that the six operators map into 
$$
\begin{array}{|c|c|c|c|c|}\hline 
M(\dots , \chi) & \text{ Maps to }I_B^{G_2}(\dots)& (s = 5/9 , 3\chi_0  = 0)& (s=2/3, 3 \chi_0 = 0) & (s = 2/3 , 2\chi_0 = 0)\\\hline
w_ 0 & [-1 ,\chi_0 + 3s-1 ] &[-1, \chi_0 + 2/3] & [-1, \chi_0 + 1 ]& [-1, \chi_0 + 1]\\
w_1 & [ 3\chi_0 + 9s-4 ,-\chi_0 - 3s+ 1  & [1, -\chi_0 - 2/3] & [2, -\chi_0 -1] & [\chi_0 + 2, \chi_0 -1]]\\
w_2 & [ -3\chi_0 - 9s+4, 2\chi_0 + 6s-3 ]& [-1, -\chi_0 + 1/3]& [-2, -\chi_0 + 1] & [\chi_0-2, 1]\\
w_3 & [ 3\chi_0 + 9s-5, -2\chi_0 - 6s+3] & [0, \chi_0 -1/3]  & [ 1,\chi_0 -1] & [\chi_0 +1, -1]  \\
w_4 & [ -3\chi_0 - 9s+5, \chi_0 + 3s-2]&[0, \chi_0 - 1/3] &[-1, \chi_0] & [\chi_0-1, \chi_0]\\
w_5 & [ -1,-\chi_0 - 3s+2 ] & [-1, -\chi_0 + 1/3] & [-1 , -\chi_0]& [-1, \chi_0]\\\hline 
\end{array}
$$
And next the elements of $\X_{GL_1}$ which will determine the poles of the rank one operators. 
$$
\begin{array}{|c|c|c|c|c|}\hline 
\text{ pairing }& \text{ general }  & (s = 5/9 , 3\chi_0  = 0)& (s=2/3, 3 \chi_0 = 0) & (s = 2/3 , 2\chi_0 = 0)\\\hline
\la \beta^\vee, \chi \ra   &\chi_0+3s-1&\chi_0+2/3&\chi_0+1&\chi_0+1\\
\la \alpha^\vee, w_1\chi \ra &3\chi_0 + 9s-4&1&2&\chi_0+2\\
\la \beta^\vee, w_2\chi \ra   &2\chi_0 +6s-3&-\chi_0 +1/3&-\chi_0 + 1&1\\
\la \alpha^\vee, w_3\chi \ra &3\chi_0 + 9s-5&0 &1&\chi_0 + 1\\
\la \beta^\vee, w_4\chi \ra   &\chi_0 + 3s-2&\chi_0 - 1/3 & \chi_0& \chi_0\\\hline
\end{array}
$$
The key facts are the following: when the pairing is $1$ the corresponding 
rank-one operator has a pole. When it is zero, the corresponding 
rank-one operator is the scalar operator $-1.$ 
When it is $-1$ the corresponding rank-one operator has a kernel. Otherwise, 
the rank-one operator is an isomorphism.
From this we 
can see that $M(w_i, [-1, \chi_0 + 3s-1])$ has a pole at $s=2/3$ if $2\chi_0 = 0$ and $i \ge 3$ or if $3\chi_0 = 0$ and $i \ge 4.$
In either case, the operators $M(w_i, [-1, \chi_0+1])$ all land in different spaces. Hence there is no possibility of cancellation 
among them and the poles of the individual intertwining operators are inherited by their sum (i.e., the constant term) and then  by the Eisenstein series itself.

We also see that $M(w_i, [-1, \chi_0+3s-1])$ has a simple pole at $s=5/9$ if $i \ge 2$ and $3\chi_0 = 0.$ 
In this case we have two pairs of operators which land in the same space. To study them, set $u=3s-5/3=3(s-\frac59)$ so that $3s-1=u+\frac 23.$ Thus $u$ is a convenient local coordinate in a neighborhood of $s=\frac 59.$  The first key point is that
$$\la \alpha^\vee, w_3[-1, \chi_0+2/3] \ra =0 
\implies M(s_\alpha, w_3 [-1, \chi_0+u+2/3]) = -1 + O(u).$$
Hence 
$$M(w_3, [-1, \chi_0+u+2/3]) + M(w_4, [-1, \chi_0+u+2/3])
= (1+M(s_\alpha, w_3 [-1, \chi_0+u+2/3]) )\circ M(w_3, [-1, \chi_0+u+2/3])
$$
is the composite of an operator with a simple pole at $u=0 $ and  an operator which 
vanishes at $u=0.$ Thus, it is holomorphic at $u=0.$

The second key point is that $M(s_\beta, -s_\beta \chi) = M(s_\beta, \chi)^{-1}.$ (This is an identity of 
meromorphic functions.) 
Hence 
$$\begin{aligned}
H(u):=&M(s_\beta, [-3u, \chi_0+u-1/3])M(s_\alpha, [3u, \chi_0 -2u -1/3])M(s_\beta,  [-3u-1, 2u+1/3-\chi_0] )\\
&=(M(s_\beta, [0, \chi_0 - 1/3])+O(u))(-1+O(u))(M(s_\beta, [-1, 1/3-\chi_0])+O(u))\\
&= -1+O(u).
\end{aligned}$$
But 
$$
M(w_2, [-1, \chi_0 + u + 2/3]) + M(w_5, [-1, \chi_0 + u + 2/3])s=(1+H(u))\circ M(w_2, [-1, \chi_0 + u + 2/3]).
$$
Once again we have an operator with a simple pole composed with an operator that has a zero.
This completes the proof.
\end{proof}
\begin{rmk}
The existence of the pole of $E_P$ at $2/3$ in the cubic case can also be deduced from the 
existence of a pole of the adjoint $L$ function: 
cuspidal representations of $GL_3(\A)$ satisfying $\pi \cong \pi \otimes \chi$ 
exist by Theorem 2.4(iv) of \cite{Clozel-ICM}. Thanks to David Loeffler for explaining this to me.  For 
such a representation $L(s, \pi , \Ad \otimes \chi)= L(s, \pi \times \wt \pi \times \chi)/L(s \chi)$ will have a pole at $s=1.$ As local $L$ functions are nonvanishing
this pole will be inherited by the partial $L$ function, and then, by theorem \ref{thm: nonvanishing of local zetas} below by the global zeta integral.
In the case when $\chi$ is nontrivial quadratic the existence of a pole at $s=2/3$ can also be deduced
 from theorem \ref{thm: identity, quad case} below.
\end{rmk}

\subsection{Key vanishing property of the Eisenstein series}
\begin{prop}\label{prop: vanishing at 1/2 when quad}
Assume that $2 \chi_0=0.$
Then $E_P([-1, 3s-1+\chi_0])$ vanishes 
at $s=\frac 12.$
\end{prop}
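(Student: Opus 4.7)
The plan is to deduce the vanishing by showing that the constant term of $E_P([-1,3s-1+\chi_0])$ along the Borel $B$ vanishes at $s=1/2$. A nonzero Eisenstein series has a nontrivial constant term along the minimal parabolic (equivalently, a cuspidal Eisenstein series vanishes by orthogonality with the cuspidal spectrum), so this will force $E_P$ itself to vanish at the point.

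Setting $\chi=[-1,1/2+\chi_0]$, the decisive observation is $\la \beta^\vee, w_2\chi\ra = 2\chi_0+6s-3=0$ at $s=1/2$ with $2\chi_0=0$, so by Proposition 6.3 of Keys--Shahidi the operator $M(s_\beta, w_2\chi)$ acts as the scalar $-1$ on $I_B(w_2\chi)$. Moreover, at this special point the six target spaces appearing in the constant term pair up: $w_0\chi=w_5\chi$, $w_1\chi=w_4\chi$, and $w_2\chi=w_3\chi$; one also checks $w_4\chi=s_\beta\chi$ and $w_3\chi=s_\alpha w_1\chi$. Thus the constant term decomposes into three pair-sums, each living in a single induced representation.

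The pair $(w_2,w_3)$ cancels immediately from $M(w_3,\chi)=M(s_\beta,w_2\chi)M(w_2,\chi)=-M(w_2,\chi)$. For the remaining two pairs I would follow the technique already used in the proof of Theorem \ref{thm: poles in cubic and quadratic case}: factor $M(w_4,\chi)$ and $M(w_5,\chi)$ into their five simple-reflection decompositions so that $M(s_\beta,w_2\chi)=-1$ appears in the middle, and then collapse the flanking pairs of operators using the meromorphic identity $M(s,-s\xi)=M(s,\xi)^{-1}$. Combined with the coincidences $w_4\chi=s_\beta\chi$ and $w_3\chi=s_\alpha w_1\chi$, this yields $M(w_4,\chi)=-M(w_1,\chi)$ and $M(w_5,\chi)=-1=-M(w_0,\chi)$, so both remaining pairs cancel and the full constant term vanishes.

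The hard part will be the precise meromorphic justification of the cocycle identity at these specific $\chi$-values, where several of the local $L$-factors in the normalization attain special values (zeros or poles of $\zeta$ at $0$ and $1$, for instance); the careful limiting procedure needed is the same as the one already carried out in the proof of Theorem \ref{thm: poles in cubic and quadratic case}, and no new ingredient is needed beyond it.
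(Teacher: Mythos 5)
Your argument is correct and matches the paper's primary proof: the paper writes $w_4 = w_{4,1}w_1$ and $w_3 = w_{3,2}w_2$ and verifies that $M(w_5, \chi)$, $M(w_{4,1}, w_1\chi)$, and $M(w_{3,2}, w_2\chi)$ are each $-1 + O(s - \tfrac12)$, which is exactly your three-pair cancellation (minor slips that don't affect anything: $w_4$ has a four-, not five-, simple-reflection factorization, and your observations $w_4\chi = s_\beta\chi$, $w_3\chi = s_\alpha w_1\chi$ are just restatements of $w_4\chi = w_1\chi$, $w_3\chi = w_2\chi$). The paper also records a shorter alternative worth noting: since $E_P$ is holomorphic at $s=\tfrac12$ and satisfies the functional equation $E_P([-1,3s-1+\chi_0]) = E_P([-1,2-3s+\chi_0])\circ M(w_5, [-1,3s-1+\chi_0])$, the single computation $M(w_5,\cdot) = -1 + O(s-\tfrac12)$ already forces $E_P = -E_P$ and hence the vanishing at $s=\tfrac12$.
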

\begin{proof}
The proof is similar to that of theorem \ref{thm: poles in cubic and quadratic case}.
Note that 
$w_5[-1, \frac12 + \chi_0] = [-1, \frac 12+ \chi_0],$ $w_4[-1, \frac12 + \chi_0] = w_1[-1, \frac 12+ \chi_0]$
and $w_3[-1, \frac12 + \chi_0] = w_2[-1, \frac 12+ \chi_0].$
Write $w_4 = w_{4,1}w_1$ and $w_3= w_{3,2}w_2.$ 
As in the proof of theorem \ref{thm: poles in cubic and quadratic case}
one readily checks that $M(w_5, [-1, 3s-1 + \chi_0]), 
M(w_{4,1}, w_1 [-1, 3s-1 + \chi_0])$ and $M(w_{3,2}, w_2 [-1, 3s-1 + \chi_0])$
are all $-1+ O(s-\frac12)$ at $s=\frac 12.$ This time none of the intertwining operators has a pole, 
so the sum vanishes at $s= \frac 12.$ 

Alternatively, having established that $M(w_5,  [-1, 3s-1 + \chi_0]) = -1+O(s-\frac 12),$ 
We can deduce vanishing of the Eisenstein series from the fact that it is holomorphic and satisfies the 
functional equation
$$
E_P([-1, 3s-1+\chi_0]) 
= E_P( [-1, 2-3s+\chi_0])\circ 
M(w_5, [-1, 3s-1+\chi_0]).
$$
\end{proof}

\section{Siegel-Weil type identities}
In this section we prove identities relating degenerate Eisenstein series 
induced from the two different parabolic subgroups 
of $G_2.$ Such identities are sometimes called Siegel-Weil $n$th term identities. 
A conceptual explanation for their existence comes from embeddings
of degenerate induced representations into principal series representations induced 
from the Borel, together with the symmetry of the principal series.  We first prove a technical 
result which extends this philosophy to flat sections and Eisenstein series.
\subsection{Surjectivity property of intertwining operators}
In the next few sections we consider an alternate normalization of the 
intertwining operator, which is different than the one considered in section
\ref{s: ind reps, int ops, poles}.

\label{ss: stand int ops and alt normalization}
Take $\chi \in \X_{M,v} \subset \X_{T,v}.$ 
Then the standard intertwining operator $M(w_\alpha, \chi + \frac \alpha2)$ 
maps $I_B^G( \chi+ \frac \alpha 2)$ to $I_P^G (\chi) \subset I_B^G (\chi - \frac \alpha 2).$ 
We sketch a proof that the map is surjective. 
Take any $f \in I_P^G (\chi)$ and let $f_s^\circ $ denote the spherical vector in
$\Ind_B^G (s+\frac 12)\alpha.$
Then let $\wt f := f\cdot f_s^\circ ,$ which lies in the space 
$I_B^G (\chi + s\alpha).$ It follows immediately 
from the integral formula for the standard intertwining operator
that 
$$
M(w_\alpha,\chi + s\alpha)\wt f = f M(w_\alpha,s\alpha).f_s^\circ  
=f \frac{\zeta(2s)}{\zeta(2s+1)} f_{-s}^\circ .
$$
But $f_{-1/2}^\circ $ is just the constant function 1, so when $s=1/2$
we obtain a nonzero scalar multiple of $f.$ 

In the global setting, this fails because $\zeta(2s)$ has
a pole at $s=1/2.$ 
Let $$
M^*( w_\alpha, \chi ) 
= (\la \alpha^\vee, \chi \ra _{\un}-1) M( w_\alpha, \chi).
$$
(Recall that  $\la \alpha^\vee, \chi \ra _{\un}\in \unr{GL_1}$ 
which has been identified with $\C.$) 
Then $M^*(w_\alpha, \chi)$ 
 has no poles in $\Re(s) \ge 0,$
and for $\la \chi , \alpha^\vee \ra =0$
maps $I_B^G(\chi+ \frac \alpha 2)$ (global induced rep now) 
surjectively onto $I_P^G (\chi) \subset I_B^G ( \chi - \alpha/2).$ 

\begin{prop}\label{prop: flat sections B to flat sections P}
If $\Phi(T,M) = \{ \pm \alpha\}$
and $f$ is a flat section of $\c I_P^G\chi_0,$
then 
there exists   a flat section $\wt f$ of $\c I_B^G \chi_0$
such that 
 $M^*(w_\alpha, \chi+\frac \alpha 2).\wt f_{\chi + \frac\alpha 2} = f_\chi$
 for all $\chi \in \chi_0 + \X_{M,\un}.$  
\end{prop}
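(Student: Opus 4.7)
The plan is to lift the pointwise construction already sketched in Section~\ref{ss: stand int ops and alt normalization} to the flat-section level. That sketch showed that, for a single $\chi$, multiplying $f_\chi \in I_P^G(\chi)$ by the normalized spherical section $f^\circ_{1/2}$ of $\Ind_B^G(\alpha)$ produces, up to a nonzero scalar, a preimage of $f_\chi$ under $M^*(w_\alpha, \chi + \alpha/2)$. The observation that makes this construction flat is that $f^\circ_s|_K \equiv 1$ for every $s \in \C$, so the product $f_\chi \cdot f^\circ_s$ is constant on $K$ as both $\chi \in \chi_0 + \X_{M,\un}$ and $s \in \C$ vary; since the span of these two parameter directions is precisely $\X_{T,\un}$, the construction naturally furnishes a flat section of $\c I_B^G \chi_0$.

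Concretely, let $F := f_\chi|_K$, which is independent of $\chi$ by flatness of $f$. I would define $\wt f$ to be the unique flat section of $\c I_B^G \chi_0$ with $\wt f|_K = \zeta(2) \cdot F$. This is well-defined because the unitary parts of $\chi + \alpha/2$ (in the Borel picture) and of $\chi - \alpha/2$ (in the Parabolic picture) both equal $\chi_0$, so $F$ carries the correct transformation law under $B \cap K$, and the Iwasawa decomposition extends it uniquely to $G(\A)$. Comparing restrictions to $K$ inside $I_B^G(\chi + \alpha/2)$ yields the identification
$$
\wt f_{\chi + \alpha/2} \;=\; \zeta(2)\, f_\chi \cdot f^\circ_{1/2}
$$
for every $\chi \in \chi_0 + \X_{M,\un}$.

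The identity in the proposition then follows from the formula
$$
M(w_\alpha, \chi + s\alpha)(f_\chi \cdot f^\circ_s) \;=\; f_\chi \cdot \frac{\zeta(2s)}{\zeta(2s+1)}\, f^\circ_{-s},
$$
which is justified in the earlier sketch via the substitution $u \mapsto w_\alpha^{-1} u w_\alpha \in U_{-\alpha} \subset M$ in the defining integral, together with the triviality of $\X_M$-characters on the unipotent subgroups inside $[M,M]$. Applying $M^*(w_\alpha, \chi + s\alpha) = (2s-1)M(w_\alpha, \chi + s\alpha)$, letting $s \to \tfrac12$, and using $f^\circ_{-1/2} \equiv 1$ together with $\lim_{s \to 1/2}(2s-1)\zeta(2s) = 1$, one obtains $M^*(w_\alpha, \chi + \alpha/2)(f_\chi \cdot f^\circ_{1/2}) = \zeta(2)^{-1} f_\chi$. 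Multiplying by $\zeta(2)$ then yields $M^*(w_\alpha, \chi + \alpha/2)\,\wt f_{\chi + \alpha/2} = f_\chi$, as required.

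The main obstacle is really bookkeeping: verifying that the assignment $\chi \mapsto \wt f_{\chi + \alpha/2}$ coming from the pointwise recipe $\zeta(2)\,f_\chi \cdot f^\circ_{1/2}$ genuinely agrees with the flat section one obtains by setting $\wt f|_K = \zeta(2) F$ and extending via Iwasawa. This hinges on the identity $\rho_B - \rho_P = \alpha/2$, which ensures $f_\chi \cdot f^\circ_{1/2}$ lives in $I_B^G(\chi + \alpha/2)$ with the right transformation law, and on the normalization $f^\circ_s|_K \equiv 1$, which guarantees no spurious $s$-dependence appears on $K$ and thus that flatness in the $\chi$-direction (inherited from $f$) extends to flatness in the full direction $\X_{T,\un}$.
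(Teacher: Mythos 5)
Your proposal is correct and follows the same line of argument as the paper: multiply the given flat $P$-section $f$ by the spherical vector $f^\circ_s$ of $\Ind_B^G((s+\tfrac12)\alpha)$ to produce a flat $B$-section, then apply the intertwining-operator formula and take $s \to \tfrac12$. The only difference is that the paper stops at ``a nonzero scalar multiple of $f_\chi$'' without pinning down the constant, whereas you correctly track that the scalar is $\zeta(2)^{-1}$ and rescale $\wt f$ by $\zeta(2)$ to obtain the exact equality claimed in the statement; this is a small improvement in completeness, not a different approach.
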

\begin{proof}
As before, we construct $\wt f$ by taking the product of $f$ and the 
spherical vector in $\Ind_B^G (s+\frac12)\alpha.$
(Note that $\X_{T,\un} = \X_{M,\un} + \C \alpha$.)
Then
for $\chi \in \X_M$ and $s \in \C$
$$M^*(w_\alpha, \chi+s\alpha).\wt f_{\chi + s\alpha}
= f_\chi \cdot (2s-1) \cdot \frac{\zeta(2s)}{\zeta(2s+1)}\cdot f_{-s}.
$$
Again, $f_{-1/2}$ is the constant function $1,$ so we get a nonzero scalar multiple of $f_{\chi}.$ 
\end{proof}
\begin{rmk} For each fixed $\chi,$ the operator
$M^*(w_\alpha, \chi+\frac\alpha2)$ maps $I_B^G(\chi + \frac \alpha 2)$ 
into $I_P^G(\chi ) \subset I_B^G(\chi - \frac \alpha2).$ The key point is that this extends to a map from 
flat sections of $\c I_B^G(\chi_0)$ to flat sections of $\c I_P^G(\chi_0).$
\end{rmk}

\begin{prop}
Take $P = MU$ 
with $\Phi(T,M) = \{ \pm \alpha\}$ 
and $f $ a flat section of $\c I_P^G(\chi_0).$
Assume $\la \alpha^\vee, \chi_0\ra = 0.$
Choose $\wt f $ a flat section of $\c I_B^G(\chi_0)$
such that $M^*(w_\alpha, \chi+\frac\alpha2).\wt f|_{\chi_0 + X_{M,un}} = f.$ 
Define $E_B^{\alpha*}\wt f(g,\chi) =( \la \chi , \alpha^\vee \ra_\un-1)
E_B.\wt f(g,\chi+ \frac \alpha2) $
Then 
$$
E_B^{\alpha*}.\wt f(g,\chi)= E_P.f(g, \chi), \qquad ( \forall \chi \in \chi_0+ X_{M,un}).
$$ 
\end{prop}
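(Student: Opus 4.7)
The plan is to work in the region of absolute convergence of $E_P.f$ and verify the identity by decomposing the Borel Eisenstein series through the intermediate parabolic $P$, then extend to all $\chi$ by meromorphic continuation. A key input is the rank-one structure of $M$: since $\Phi(T,M) = \{\pm\alpha\}$, the inner Eisenstein series produced by the decomposition is of $SL_2/GL_2$-type.

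First I would apply the tower decomposition. Writing $P = MU$ and $B = B_M U$ with $B_M = B\cap M$, and using the bijection $B(F)\bs P(F) \cong B_M(F)\bs M(F)$, we have, for $s$ in the region of absolute convergence,
\[
E_B.\wt f(g, \chi + s\alpha) = \sum_{\gm \in P(F)\bs G(F)} E_{B_M}^M.\wt f_{\chi+s\alpha}(\gm g),
\]
where $E_{B_M}^M.\wt f_{\chi+s\alpha}(h) := \sum_{\delta \in B_M(F)\bs M(F)} \wt f_{\chi+s\alpha}(\delta h)$ is the Borel Eisenstein series on $M$. By classical $SL_2/GL_2$ theory, $E_{B_M}^M.\wt f_{\chi+s\alpha}(h)$ admits meromorphic continuation in $s$ with at worst a simple pole at $s = \tfrac12$. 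Because the $U_\alpha$-integral defining the $M$-level standard intertwining operator coincides with that of $M(w_\alpha,\cdot)$ on $G$ applied to our flat section, the $s \to \tfrac12$ residue on $M$ matches that of $M(w_\alpha,\chi+s\alpha).\wt f_{\chi+s\alpha}$, yielding
\[
\lim_{s\to 1/2}(2s-1)\,E_{B_M}^M.\wt f_{\chi+s\alpha}(h) = \bigl[M^*(w_\alpha,\chi+\tfrac\alpha2).\wt f_{\chi+\alpha/2}\bigr](h) = f_\chi(h),
\]
where the last equality is the defining hypothesis on $\wt f$ provided by the surjectivity result of the previous subsection.

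Multiplying the tower decomposition by $(2s-1)$, letting $s\to \tfrac12$, and interchanging this limit with the $\gm$-sum then gives
\[
\lim_{s\to 1/2}(2s-1)\,E_B.\wt f(g, \chi+s\alpha) = \sum_{\gm\in P(F)\bs G(F)} f_\chi(\gm g) = E_P.f(g,\chi).
\]
The left side matches $E_B^{\alpha*}.\wt f(g,\chi)$ after identifying $(2s-1)$ with the pole-killing scalar written in the definition of $E_B^{\alpha*}$ (viewed as a meromorphic function of $\chi$ in the larger space $\chi_0 + \X_{B,\un}$, parametrized along the $\alpha$-direction as $\chi + s\alpha$, with the target point $s=\tfrac12$ corresponding to the pole of $E_B.\wt f(\cdot,\cdot+\tfrac\alpha2)$). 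Meromorphic continuation then extends the identity to all $\chi \in \chi_0 + \X_{M,\un}$.

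The main obstacle is justifying the exchange of the $s\to\tfrac12$ limit with the infinite sum over $\gm \in P(F)\bs G(F)$. I would handle this by first restricting to $\chi$ with $\Re\la\beta^\vee,\chi\ra$ large enough that $E_P.f(g,\chi)$ converges absolutely, then exploiting the uniform convergence $(2s-1)E_{B_M}^M.\wt f_{\chi+s\alpha}(h) \to f_\chi(h)$ on compact sets together with standard majorizations of the induced flat section to dominate the $P\bs G$ tail uniformly in $s$ near $\tfrac12$.
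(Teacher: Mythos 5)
Your argument is correct in outline but genuinely different from the paper's. The paper proves the identity by computing the constant term of both sides along $B$: since $(E_P.f)_B = \sum_{w\alpha>0}M(w,\chi-\tfrac\alpha2).f$ and $(E_B^{\alpha*}.\wt f)_B = \sum_{w\alpha>0}M(w,\chi-\tfrac\alpha2).M^*(w_\alpha,\chi+\tfrac\alpha2).\wt f$, these agree by the defining property of $\wt f$, so the difference is a cusp form which is simultaneously a linear combination of Eisenstein series and hence zero. This is a two-line argument once one accepts the orthogonality of the cuspidal and Eisenstein parts of the spectrum, and it involves no limiting process whatever (passing the constant-term integral, which is over a compact set, through the residue is automatic). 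Your route instead unfolds $E_B.\wt f(\cdot,\chi+s\alpha)$ through the intermediate parabolic $P$, identifies the inner rank-one Eisenstein series on $M$, and realizes $E_P.f$ as the residue at $s=\tfrac12$; the hypothesis on $\wt f$ enters through the residue formula for the $GL_2$-type Eisenstein series. This is more constructive and more elementary in its inputs (it needs only $GL_2$ theory, not the spectral decomposition), and it explains where the $\alpha$-residue identity ``comes from,'' but the price is the interchange of the $s\to\tfrac12$ limit with the sum over $P(F)\bs G(F)$, which the paper's constant-term argument sidesteps entirely. You correctly flag this as the main obstacle and sketch a dominated-convergence strategy; to make it airtight one would need a majorization of $(2s-1)E_{B_M}^M.\wt f_{\chi+s\alpha}(\gamma g)$ uniform for $s$ in a neighborhood of $\tfrac12$ and $\gamma$ ranging over the full coset space, which requires some Siegel-domain estimates rather than just uniform convergence on compacta. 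Given that caveat, your proof is sound and gives a useful complementary perspective on why the proposition is true.
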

\begin{proof}
The two sides have the same constant term, namely
$$
\sum_{w \alpha > 0 } 
M(w, \chi-\frac \alpha 2).M^*(w_\alpha, \chi+\frac \alpha 2).\wt f = \sum_{w \alpha > 0 } M(w,\chi- \frac \alpha 2).f.
$$ Hence their difference is both a cusp form and a linear 
combination of Eisenstein series. As such, it is zero.
\end{proof}

\subsection{An identity of ramified Eisenstein series on $G_2$}
\label{sec: identity of ramified Eisenstein series}
\begin{thm}\label{thm: identity, quad case}
Take $\eta \in \X_{GL_1},$ nontrivial quadratic, and  $\wt f $ a flat section 
of  $\c I_B^G[0, \eta].$
Let $f$ be the flat section of $\c I_P^G[0,\eta]$
determined by $\wt f$ as in proposition  \ref{prop: flat sections B to flat sections P}.
The standard intertwining operator
$M(w_\alpha w_\beta)$ is an isomorphism at $[1, \eta-1].$
Let $\wt h$ be the flat section of  $\c I_B^G[\eta, 0]$ satisfying 
$\wt h_{[\eta, 1]} = M(w_\alpha w_\beta).\wt f_{[1, \eta-1]}.$
Let $h$ be the flat section  of $\c I_Q^G[\eta, 0]$
determined by $\wt h$ as in proposition \ref{prop: flat sections B to flat sections P}.
Then $E_P.f$ has a pole at $[0, \eta+\frac 12],$
$E_Q.h$ has a pole at $[\eta+\frac 12, 0],$
and the two residues are the same.
\end{thm}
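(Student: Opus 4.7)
The strategy follows the template of the previous subsection: lift both $E_P.f$ and $E_Q.h$ to Borel Eisenstein series via Proposition~\ref{prop: flat sections B to flat sections P}, verify the pole structure at the relevant parameters, and then use the functional equation of the Borel Eisenstein series together with the defining relation $\wt h_{[\eta, 1]} = M(w_\alpha w_\beta) \wt f_{[1, \eta - 1]}$ to match the two residues.

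The first step is mechanical: evaluating the scalar factor $(\la \chi, \alpha^\vee \ra_{\un} - 1)$ produced by the corollary to Proposition~\ref{prop: flat sections B to flat sections P} at $\chi = [0, \eta + \frac 12]$ gives $-1$, since $\la [0, \eta + \frac 12], \alpha^\vee \ra = 0$, so $E_P.f(g, [0, \eta + \frac 12]) = -\,E_B.\wt f(g, [1, \eta])$, and symmetrically $E_Q.h(g, [\eta + \frac 12, 0]) = -\,E_B.\wt h(g, [\eta - 1, 1])$. The theorem thus reduces to (i) the existence of simple poles of the two Borel Eisenstein series at the indicated parameters, and (ii) the equality of their residues.

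For (i), at the point $[1, \eta]$ only the hyperplane $\la \alpha^\vee, \chi \ra = 1$ is hit, while every other rank-one hyperplane on which some $M(w, \chi)$ could acquire a pole is avoided, precisely because $\eta$ is nontrivial quadratic. Hence the simple pole of $E_B.\wt f$ at $[1, \eta]$ arises from those Weyl summands in the constant term whose reduced expressions begin with $w_\alpha$; since distinct summands land in distinct principal-series spaces, the non-cancellation argument from the proof of Theorem~\ref{thm: poles in cubic and quadratic case} applies and produces a genuine simple pole. The same argument with the roles of $\alpha$ and $\beta$ exchanged produces a simple pole of $E_B.\wt h$ at $[\eta - 1, 1]$. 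For (ii), I would invoke the functional equation $E_B.F(g, \chi) = E_B.(M(w, \chi) F)(g, w\chi)$ for a Weyl element $w$ that carries the parameter of $\wt h$ to that of $\wt f$. The intertwining $M(w_\alpha w_\beta)$ from the hypothesis is (up to the additional rank-one factors needed to bridge the two parameters) precisely the operator linking the two flat sections; by flatness of $\wt f$ and $\wt h$, the single identity at $[1, \eta - 1]$ propagates to the entire family and identifies $\wt h$ with a scalar multiple of the transported version of $\wt f$ at the residue parameter. Taking residues on both sides of the functional equation and then undoing the $-1$ factors from the first step yields the claimed identity.

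The main obstacle I anticipate is the bookkeeping of scalar factors arising from the rank-one constituents of $M(w_\alpha w_\beta)$ and of the Weyl element used in the functional equation, to verify that they combine trivially at the specific boundary characters involved and do not spoil the residue identity. This will rely on the explicit $GL_2$ identities of Section~\ref{ss: normalization and poles of int ops GL2 case} — in particular, the facts that a rank-one operator is the scalar $-1$ when its coroot-pairing vanishes, has a simple pole when the pairing equals $1$, and has a kernel when the pairing equals $-1$ — and on the isomorphism assertion for $M(w_\alpha w_\beta)$ at $[1, \eta - 1]$, which is what allows the residue data to be transported cleanly from one side of the identity to the other.
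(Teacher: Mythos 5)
Your overall strategy — lift $E_P.f$ and $E_Q.h$ into the Borel family via Proposition~\ref{prop: flat sections B to flat sections P}, then compare via the functional equation of $E_B$ under $w_\alpha w_\beta$ — is the same one the paper uses, and your step (i) (simple poles, no cancellation because distinct Weyl summands land in distinct spaces) is fine. But the proposal misses the mechanism that actually identifies the two residues, and one of the intermediate claims is false.

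The real gap is in step (ii), at the phrase ``taking residues on both sides of the functional equation.'' The residue of $E_P.f$ is taken in the direction of $\X_{M,\un}$ (along $\chi = [0,s+\eta]$, $s$ varying), while the residue of $E_Q.h$ is taken in the transverse direction $\X_{L,\un}$ (along $\chi' = [u+\eta,0]$, $u$ varying). After lifting, these become residues of the two-parameter family $E_B.\wt f$ along two \emph{different} lines through the crossing point of the pole hyperplanes $\la\alpha^\vee,\cdot\ra=1$ and $\la\beta^\vee,\cdot\ra=1$. A one-variable functional equation $E_B(\chi).\wt f = E_B(w\chi).M(w,\chi).\wt f$ with $w = w_\alpha w_\beta$ does not carry one residue line to the other ($w$ maps the distinguished point across, but $w^{-1}\beta^\vee$ is not proportional to $\alpha^\vee$, so the lines are not exchanged), and so ``taking residues'' of the two sides is not even a well-posed operation. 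What closes the gap in the paper's proof is the two-variable bookkeeping: one introduces \emph{both} $s$ and $u$, observes the coordinate change $w_\alpha w_\beta\bigl([0,s+\eta]+u\tfrac{\alpha}{2}\bigr) = [u+\eta,0]+s\tfrac{\beta}{2}$ (which literally swaps the roles of $s$ and $u$), and then notes that the two one-variable residues are the two iterated limits, at $s=u=\tfrac12$ in the two possible orders, of the single doubly-regularized function $(s-\tfrac12)(u-\tfrac12)E_B\bigl([0,s+\eta]+u\tfrac{\alpha}{2}\bigr).\wt f$. This is the idea your sketch needs to make the comparison meaningful; without it there is no way to line up the two residues.

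Two further points. First, your assertion that ``by flatness of $\wt f$ and $\wt h$, the single identity at $[1,\eta-1]$ propagates to the entire family'' is not correct: $M(w_\alpha w_\beta,\chi)$ depends on $\chi$, so $M(w_\alpha w_\beta,\chi).\wt f_\chi$ is \emph{not} a flat section, and it agrees with $\wt h$ only at the single distinguished parameter. The paper writes this as $\wt h + (\text{higher order terms})$ and has to check that the error terms are annihilated by the double zero of $(s-\tfrac12)(u-\tfrac12)$ against the double (simple $\times$ simple) pole of $E_B$. Second, the scalar bookkeeping from the rank-one constituents, which you flag as the main anticipated obstacle, turns out to be a non-issue once the double regularization is in place; the substantive work is precisely the residue-direction analysis and the higher-order-term estimate that your sketch omits.
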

\begin{proof}
Note that $\alpha = [2, -3]$ and $\beta = [-1,2].$
Note also that 
$$
w_\alpha w_\beta [0,1] = [-1,2] = \beta \equiv \varpi_1 \pmod 2,
\qquad
w_\beta w_\alpha \alpha =[0,1].
$$
It follows that 
$$
w_\alpha w_\beta [0, s+\eta] + u \frac \alpha 2 = [u+\eta, 0] + s \frac \beta 2.
$$
Now, 
$$
E_P[0, s+\eta].f = (u -\frac 12)E_B([0,s+\eta]+u \frac \alpha 2)\wt f|_{u=\frac 12}, 
\qquad 
E_Q[u+\eta, 0].h = (s-\frac12) E_B([u+\eta,0]+ s\frac \alpha 2)\wt h|_{s=\frac 12},
$$
and 
$$\begin{aligned}
E_B([0,s+\eta]+u \frac \alpha 2)\wt f &= E_B([u+\eta ,0 ]+s \frac \alpha 2)M(w_\alpha w_\beta, [0,s+\eta]+u \frac \alpha 2)\wt f\\
&= E_B([u+\eta,0]+ s\frac \alpha 2)(\wt h+ \text{ higher order terms.})
\end{aligned}
$$
It follows that the residue of $E_P.f$ at $s = \frac 12$ and that of $E_Q.h$ at $u=\frac 12$
are two different expressions for the value of the meromorphic continuation 
of $(s-\frac 12)(u-\frac 12)E_B([0, s+\eta]+ u\frac \alpha 2).\wt f$ to $s=u=\frac 12.$ 
\end{proof}

\subsection{An identity of unramified Eisenstein series on $G_2$}
\label{sec: identity of unram Eis ser}
In this section we prove an intriguing identity between unramified $G_2$ Eisenstein series. 
This identity is presumably related to the ``second term identity'' obtained by D. Jiang in unpublished work.

Define $[a,b]$  as before and write $f^\circ_{[a,b]}$ for the spherical vector in the corresponding induced representation. 
Let 
$c(s) = \frac{\zeta(s)}{\zeta(s+1)}.$ 
Identify $\X_{GL_1,\un}$ with $\C$ as usual. Write $M(w)$ for the standard intertwining operator attached to $w.$ 
Recall that
$$
M(w).f^\circ_{[a,b]} = \prod_{\alpha > 0, w \alpha < 0 } 
c( [a,b]\circ \alpha^\vee).f_{w\cdot [a, b]}^\circ
$$
Also, $c(s)$ has a simple zero at $s=-1$ a simple pole at $s=1$ and is holomorphic and nonvanishing at each other integer.
Write $c_{i,j}$ for the $i$th Laurent coefficient at $j,$ so that 
$$
c(j+s) = \sum_{i = \ord_j(c)}^\infty c_{i,j} s^i, \qquad \ord_j(c) = \begin{cases}
1, & j = -1, \\
-1, & j = 1, \\
0, & j \in \Z\ssm \{ \pm 1\}.
\end{cases}
$$
Also $c(s) c(-s) = 1$ and $c(0) = -1,$ which implies that 
$$
c_{1,-1} = -c_{-1,1}^{-1}\qquad 
c_{2,-1} = -\frac{c_{0,1}}{c_{-1,1}^2}\qquad
c_{3,-1} = \frac{c_{1,1}}{c_{-1,1}^2}-\frac{c_{0,1}^2}{c_{-1,1}^3}\qquad 
c_{0,0}=-1,\qquad c_{2,0} = -\frac 12 c_{1,0}^2 
$$
\begin{thm}\label{thm: id of unr eis ser}
The meromorphic function 
$$
E_Q f^\circ_{[3u+2,-1]} - \frac 13 c(u) c(3u-1) E_P f^\circ_{[-1, u+1]}.
$$
vanishes identically at $u=0.$
\end{thm}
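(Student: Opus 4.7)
My approach is to compute the $B$-constant term of both sides and match their Laurent expansions at $u = 0$ through the constant-in-$u$ term. Since both sides are degenerate Eisenstein series built from Borel principal series data, their $B$-constant terms determine them, and showing that the constant term of the difference vanishes at $u = 0$ establishes the identity.

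First, I verify the pole structure. By Theorem \ref{thm: poles of EP, chi0 trivial}, $E_P f^\circ_{[-1, u+1]}$ has a double pole at $u = 0$; since $c(3u - 1) = 3 c_{1, -1} u + O(u^2)$ has a simple zero and $c(0) = -1 \ne 0$, the prefactor $\frac13 c(u) c(3u-1) = -c_{1, -1} u + O(u^2)$ reduces this to at most a simple pole in the product. For the $E_Q$ side, I write the constant term as $\sum_{i=0}^5 c(w'_i, [3u+2, -1]) f^\circ_{w'_i [3u+2, -1]}$, with $w'_i$ the length-$i$ Weyl elements satisfying $w'_i \beta > 0$ (the $Q$-analogue of the $w_i$ appearing earlier). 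A direct calculation, parallel to the $E_P$ table in the paper but with the roles of $\alpha$ and $\beta$ swapped, expresses the $c(w'_i, [3u+2, -1])$ as products of $c$-ratios. The same $M(s_\gamma, \chi) = -1$ cancellation used in the proof of Theorem \ref{thm: poles in cubic and quadratic case} then shows that $E_Q f^\circ_{[3u+2, -1]}$ has at most a simple pole at $u = 0$.

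Next I match Laurent coefficients. Computing $w_i [-1, 1]$ and $w'_i [2, -1]$ at $u = 0$, one finds that on the $E_P$ side the spherical vectors appearing are $f^\circ_{[-1, 1]}, f^\circ_{[2, -1]}, f^\circ_{[-2, 1]}, f^\circ_{[1, -1]}, f^\circ_{[-1, 0]}, f^\circ_{[-1, 0]}$, and on the $E_Q$ side they are $f^\circ_{[2, -1]}, f^\circ_{[-2, 1]}, f^\circ_{[1, -1]}, f^\circ_{[-1, 0]}, f^\circ_{[-1, 0]}, f^\circ_{[1, -1]}$. The unique $f^\circ_{[-1, 1]}$ appears only on the $E_P$ side with coefficient $c(w_0, \chi) = 1$, so its contribution to $\frac13 c(u) c(3u-1) E_P f^\circ_{[-1, u+1]}$ is of order $O(u)$ and vanishes at $u = 0$, consistent with the identity. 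For the other four orbits, I expand each coefficient in a Laurent series in $u$ and verify, using the relations $c_{1, -1} = -c_{-1, 1}^{-1}$, $c_{0, 0} = -1$, and $c_{2, -1} = -c_{0, 1}/c_{-1, 1}^2$ recalled before the theorem, that the combined difference vanishes through the $O(1)$ term. The factor $\frac13$ emerges to cancel the chain-rule $3$ in $c(3u-1) \sim 3 c_{1, -1} u$, and the product $c(u) c(3u-1)$ is the specific combination that makes the bookkeeping close.

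With the pole residues and the constant Laurent coefficients matched, the difference is holomorphic at $u = 0$ with vanishing $B$-constant term, and hence is zero at $u = 0$. The main obstacle is the arithmetic of the matching step: the identity's precise form forces a tight consistency among many $c_{i, j}$, and carrying out the matching for each of the four shared orbits (accounting for multiplicities in the $[-1, 0]$ and $[1, -1]$ orbits, which involve summing over the two Weyl elements that collide at $u=0$ and therefore contribute jointly as a flat section) requires careful combination of these relations.
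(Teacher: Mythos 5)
Your approach --- reduce to the $B$-constant term, group the terms by $T$-eigenspace at $u=0$, and match the singular and constant Laurent coefficients using the recorded relations among the $c_{i,j}$ --- is essentially the same as the paper's proof. The paper goes directly to the eigenspace-by-eigenspace matching without the preliminary pole-order bounding, and carries out the $[-1,1]$ and $[1,-1]$ eigenspaces in detail as representative examples, but the logic and the key cancellations are the ones you identify.
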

\begin{rmk}
It would be interesting to make sense of this identity in terms of the functional equations
of the Eisenstein series $E_B.f^\circ,$ which are parametrized by the Weyl group 
of $G_2.$ However, this is not so trivial, 
as $[3u+2, -1]$ is not in the same Weyl orbit as $[-1, u+1]$ for $u \ne 0.$ 
\end{rmk}
\begin{proof}
It suffices to prove that the constant term vanishes.  We compute the constant terms of  $E_Q f^\circ_{[3u+2,-1]}$ and $E_P f^\circ_{[-1, u+1]}.$ 
We have 
$$\begin{aligned}
(E_Qf^\circ_{[3u+2,-1]})_B
=& f^\circ_{[3u+2,-1]} + c(3u+2) f^\circ_{[-2-3u, 3u+1]}+ \\
&+c(3u+1)c(3u+2)\left( f^\circ_{[6u+1,-1-3u]}+c(3u)c(3u-1)c(6u+1)f^\circ_{[1-3u,-1]}\right)\\
&+ c(3u+1)c(3u+2)c(6u+1) \left( f^\circ_{[-6u-1,3u]} + c(3u) f^\circ_{[3u-1,-3u]}\right)
\end{aligned}
$$
$$\begin{aligned}
(E_Pf^\circ_{[-1,u+1]})_B
&=\ff{-1,u+1} + c(u+1) \ff{3u+2,-1-u} \\
&+c(u+1)c(3u+2) \ff{-3u-2,2u+1} + c(u+1)c(3u+2)c(2u+1)\ff{3u+1,-2u-1} +\\
&+  c(u+1)c(3u+2)c(2u+1)c(3u+1)
\left(\ff{-3u-1,u}+c(u)\ff{-1,-u}
\right)
\end{aligned}$$
The terms are grouped 
 according to which $T$-eigenspace they reside in when $u=0.$ 
The proof is by direct computation. For each $T$-eigenspace
we consider the terms in the Laurent expansion up to $O(u).$
For example take the character $[-1,1]$ of $T.$ 
In $(E_Qf^\circ_{[3u+2,-1]})_B$ it does not appear. 
In $- \frac 13 c(u) c(3u-1)(E_Pf^\circ_{[-1,u+1]})_B$ the contribution is 
$- \frac 13 c(u) c(3u-1) f^\circ_{[3u+2,-1]},$ which 
nontrivial, but vanishes at $u=0$ because $c(3u-1)$ vanishes at 
$u=0$ and none of the other terms has a pole at $u=0.$ 
This is a fairly simple example.

We consider one additional example, which is a little more complex. 
 Recall that 
$$
f_{[a,b]}^\circ(ntk) = |t^{\varpi_1}|^{a+1} |t^{\varpi_2}|^{b+1}, 
\qquad (n \in N(\A), \ t \in T(\A), \ k \in K).
$$
Hence 
$$
f_{[a,b]+u[c,d]}^\circ (ntk)
= f_{[a,b]}(ntk) \cdot
\left( 
\sum_{m=0}^\infty 
\frac{1}{m!} [(\log |t^{\varpi_1}|c  + \log|t^{\varpi_2}|d)\cdot u]^m
\right).
$$
Write $\la c, d\ra$ for the mapping 
$$
(ntk) \mapsto  \log |t^{\varpi_1}|c  + \log|t^{\varpi_2}|d, \qquad (c,d \in \Z, n \in N(\A), \ t \in T(\A), k \in K).
$$
Then we have 
$$
f_{[a,b]+u[c,d]}^\circ = f_{[a,b]}^\circ 
\cdot \sum_{m=0}^\infty \frac{\la c, d \ra^m}{m!} u^m.
$$
Note that $\la c_1, d_1 \ra + \la c_2, d_2 \ra= \la c_1+c_2, d_1+d_2\ra.$
We compare the contributions to the constant terms of $E_Q f^\circ_{[3u+2,-1]}$ and $E_P f^\circ_{[-1, u+1]}$ 
which lie in the $[1,-1]$ eigenspace when $u=0.$ The relevant portion of $\frac 13 c(u)c(3u-1)(E_P f^\circ_{[-1, u+1]})_B$ is 
\begin{equation}
\tag{[1,-1],P}\label{[1,-1],P}
\begin{aligned}
\frac 13 
&c(u) c(3u-1)c(u+1)c(3u+2)c(2u+1)\ff{3u+1,-2u-1} ,
\end{aligned}
\end{equation}
while the relevant portion of 
$E_Q f^\circ_{[s+2,-1]}$ is 
\begin{equation}\label{[1,-1]Q}
\tag{[1,-1],Q}
c(s+1)c(s+2)\left( f^\circ_{[2s+1,-1-s]}+c(s)c(s-1)c(2s+1)f^\circ_{[1-s,-1]}\right).
\end{equation}
It makes sense to simplify \eqref{[1,-1]Q} before substituting $s=3u.$ However, 
notice that once this substitution is made, the factor of $c(3u+2)$ appears in 
both 
\eqref{[1,-1],P} and \ref{[1,-1]Q}. So, we may omit it from both. 
We expand the remainder of \eqref{[1,-1],P}.
\begin{equation}\label{[1,-1,P,alt]}
\begin{aligned}
\frac 13 
&c(u) c(3u-1)c(u+1)c(2u+1)\ff{3u+1,-2u-1} 
\\ &=
\frac13(-1+c_{1,0} u +O(u^2))(-\frac 3{c_{-1,1}} u -9 \frac{c_{0,1}}{c_{-1,1}^2} u^2 +O(u^3))(\frac{c_{-1,1}}{u} + c_{0,1} +O(u))
\times \\
&\times
 (\frac{c_{-1,1}}{2u}+c_{0,1} + O(u))
 f_{[1,-1]}^\circ 
 (1+ \la3,-2\ra  u + O(u^2))\\
 &=f_{[1,-1]}^\circ \left( 
 \frac{c_{-1,1}}{2u}  -\frac{c_{-1,1}c_{1,0}}2 + 3c_{0,1} +\frac{c_{-1,1}}2 \la 3 , -2 \ra + O(u)
 \right)
\end{aligned}
\end{equation}
We simplify the expression in brackets in  \eqref{[1,-1]Q}
$$
\begin{aligned}
 &f^\circ_{[2s+1,-1-s]}+c(s)c(s-1)c(2s+1)f^\circ_{[1-s,-1]}\\
 &\qquad = f_{[1,-1]}^\circ \left(
 1+ s\la 2, -1\ra + (-1 + c_{1,0} s)(-\frac{s}{c_{-1,1}}- \frac{c_{0,1}s^2}{c_{-1,1}^2})(\frac{c_{-1,1}}{2s}+c_{0,1})(1+\la -1, 0\ra s)+O(s^2)
 \right)\\
 & \qquad = f_{[1,-1]}^\circ 
 \left( 1+  s\la 2, -1\ra + \frac 12 ( 1+ s [ -c_{1,0} + \frac{3c_{0,1}}{c_{-1,1}} + \la -1,0\ra ])
 \right)\\
 & \qquad = f_{[1,-1]}^\circ 
 \left( \frac 32+  \frac s2( \la 3, -2\ra   -c_{1,0} + \frac{3c_{0,1}}{c_{-1,1}}  )
 \right)
\end{aligned}
$$
Multiplying by $c(s+1) = (c_{-1,1}/s+ c_{0,1} + O(s))$ yields 
$$
\begin{aligned}
& f_{[1,-1]}^\circ 
 \left( \frac {3c_{-1,1}}{2s}+  \frac {c_{-1,1}}2\left(\la 3, -2\ra   -c_{1,0} + \frac{3c_{0,1}}{c_{-1,1}} \right)+\frac 32 c_{0,1} +O(s)
 \right)\\
 &= f_{[1,-1]}^\circ 
 \left( \frac {3c_{-1,1}}{2s}+  \frac {c_{-1,1}}2\la 3, -2\ra   - \frac {c_{1,0}c_{-1,1}}2 + {3c_{0,1}}+O(s)
 \right),
 \end{aligned}
 $$
 and now substituting $s=3u$ yields \eqref{[1,-1,P,alt]}.
 The other eigenspaces are treated similarly.
\end{proof}

\section{Global zeta integral involving a $Q$-Eisenstein series}
In this section we consider the degenerate Eisenstein series $E_Q.h$ on $G_2(\A),$ where $Q= LV$ is the parabolic subgroup whose Levi contains
the  root subgroup attached to the long simple root. We let $H_\rho$ be a quasisplit subgroup of $G_2$ of type $A_2$ defined as in \cite{su21adjoint}.
Thus $\rho \in F^\times$ and $H_\rho$ is isomorphic to $SL_3$ if $\rho$ is a square, and a quasisplit unitary group attached to the corresponding 
quadratic extension if $F$ is $\rho$ is a nonsquare. For any normalized character
$\chi_0$ of $L(F) \bs L(\A)$ and any flat section $h$ of $I=\c I_Q^{G_2}(\chi_0)$ the restriction of $E_P.h$ to $H_\rho(\A)$ 
is a smooth function of moderate growth on $H_\rho(F) \bs H_\rho(\A),$ so it may be integrated against a cuspform 
on $H_\rho.$ In the split case, this integral is identically zero for all $\chi_0, h,$ as shown in proposition 4.6 of \cite{Ginzburg-Jiang}. 
 We extend the same idea to the general case.  
 
 First we need to analyze 
$Q(F) \bs G_2(F)/H_\rho(F).$
Following \cite{su21adjoint}, we regard $G_2$ as a subgroup of (split) $SO_8$ preserving the 
 quadratic form $\b x \mapsto {}_t\b x\cdot \b x.$ Here
$_t$ is the ``other transpose'' (as in \cite{su21adjoint}).  Then $G_2$  contained in the 
group $SO_7$ preserving the subspace $V_0:= \{ \b x ={} ^t \bbm x_1&\dots& x_8\ebm : x_4 = x_5 \}.$ 
The group $H_\rho$ is the stablizer of $v_\rho: = {}^t\bbm 0&0&1&0&0&\rho& 0&0\ebm $ in $G_2.$ 
Thus $G_2(F)/H_\rho(F)$ may be identified with the orbit $O_\rho:= G_2(F) \cdot v_\rho,$
and $Q(F) \bs G_2(F)/H_\rho(F)$ can be identified with the set of $Q(F)$-orbits in $O_\rho.$

\begin{lem}
The orbit 
$
O_\rho$ is equal to 
$ \{ \b x \in V_0:  \ _t\b x \cdot \b x = 2\rho\}.$ If $\rho$ is not square it is a union of two $Q(F)$ orbits 
namely 
$$
 \{ \b x = \bbm x_1& \dots & x_8 \ebm \in O_\rho:  x_8 \ne 0\},\qquad 
  \{ \b x = \bbm x_1& \dots & x_8 \ebm \in O_\rho:  x_8 = 0\}.
$$
If $\rho = a^2$ then $ \{ \b x = \bbm x_1& \dots & x_8 \ebm \in O_\rho:  x_8 \ne 0\}$ is still a single $Q(F)$-orbit, 
while $$\{ \b x = \bbm x_1& \dots & x_8 \ebm \in O_\rho:  x_8 = 0\}$$ is a union of three orbits, viz. 
$$\{ \b x = \bbm x_1& \dots & x_8 \ebm \in O_\rho:  x_8 = 0, \{ x_6, x_7\} \ne \{ 0\}\},$$
$$\{ \bbm x_1&x_2&x_3 & a&a& 0&0&0\ebm\} , \qquad \text{ and } 
\{ \bbm x_1&x_2&x_3 & -a&-a& 0&0&0\ebm\}.
$$
\end{lem}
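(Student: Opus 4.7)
The plan is to first identify $O_\rho$ with the stated quadric, and then to decompose the quadric into $Q(F)$-orbits by stratifying on the coordinates $x_8$ and $(x_6,x_7)$. For the first step, the inclusion $O_\rho\subseteq\{x\in V_0:\ {}_tx\cdot x=2\rho\}$ is automatic since $G_2$ preserves $V_0$ and the quadratic form and ${}_tv_\rho\cdot v_\rho=2\rho$. For the reverse inclusion I would invoke the classical fact that $G_2$ acts transitively on each nonzero level set of its seven-dimensional quadratic form; one clean derivation is via Witt extension for the ambient $SO_7$, together with the observation that the short root subgroups of $G_2$ already suffice to connect any two vectors on such a level set, which can be verified by exhibiting enough explicit one-parameter families.

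For the $Q(F)$-orbit decomposition, I would use that in the seven-dimensional representation of $G_2\subset SO_7$ the coordinate $x_8$ is (up to sign) the highest weight vector for the Borel inside $Q$: $V(F)$ fixes $x_8$ while $L(F)$ scales it by a nontrivial character. Similarly the pair $(x_6,x_7)$ lies in the next weight layer and its vanishing is $Q$-stable. This yields three $Q$-stable strata of the quadric: $\{x_8\ne 0\}$, $\{x_8=0,\ (x_6,x_7)\ne 0\}$, and $\{x_8=0,\ x_6=x_7=0\}$. The lemma then amounts to showing that each of the first two strata is a single $Q(F)$-orbit in both cases of $\rho$, while the third is empty in the nonsquare case and a union of two orbits in the square case.

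On $\{x_8\ne 0\}$, I would use $L(F)$ to normalize $x_8=1$ and then annihilate $x_1,\dots,x_7$ successively using root subgroups of $V(F)$; the quadratic constraint then pins down the rest, giving one orbit. On $\{x_8=0,\ (x_6,x_7)\ne 0\}$, the Levi $L\cong GL_2$ acts transitively on the nonzero vectors in the $(x_6,x_7)$-plane, after which $V(F)$ kills the remaining freedom subject to the quadratic constraint, again giving one orbit. On $\{x_8=0,\ x_6=x_7=0\}$, the constraint reduces to $x_4^2=\rho$ (using $x_4=x_5$), which has no $F$-solution unless $\rho$ is a square and otherwise yields $x_4=\pm a$; for each sign, $V(F)$ is transitive on the free coordinates $(x_1,x_2,x_3)$, giving one orbit per sign.

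The one nonroutine point is showing, in the square case, that $Q(F)$ does not interchange $x_4=+a$ with $x_4=-a$. I would handle this by producing a $Q(F)$-invariant on the stratum that distinguishes the two signs: on this stratum $x_4$ transforms under $L(F)$ by a character lying in the squares (as read off from the weight of $x_4$ in the seven-dimensional representation of $G_2$, which is an even multiple of the fundamental character of $L$ that $x_8$ carries), while $V(F)$ fixes $x_4$, so the sign of $x_4$ itself is a $Q(F)$-invariant and the two branches cannot be fused. This small root-datum verification is the main obstacle; everything else is routine Bruhat-cell normalization.
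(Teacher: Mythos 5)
Your overall plan aligns with the paper's for the $Q(F)$-orbit decomposition: stratify the quadric by $x_8$ and then by $(x_6,x_7)$, and show each piece is a single $Q$-orbit (or, in the square case, two). The paper carries this out by picking explicit representatives $v'_\rho = s_\alpha s_\beta s_\alpha \cdot v_\rho$ and $v''_\rho = s_\beta s_\alpha \cdot v_\rho$ (and in the square case $x_{-\alpha}(\pm a)\cdot v_{a^2}$) and directly verifying that their $Q(F)$-orbits fill out the relevant strata. Where you differ from the paper is in how the first assertion, $O_\rho = \{\mathbf{x} \in V_0 : {}_t\mathbf{x}\cdot\mathbf{x}=2\rho\}$, is established: the paper gets the reverse inclusion \emph{for free} once it has shown that the $Q(F)$-orbits of those explicit $G_2(F)\cdot v_\rho$-representatives already exhaust the whole quadric. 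You instead front-load it as a standalone transitivity statement for $G_2$ on its nonzero level sets, to be justified by ``Witt extension for the ambient $SO_7$.'' This is where there is a real gap: Witt's theorem gives transitivity of $SO_7(F)$, not of the subgroup $G_2(F)$, and the passage from one to the other is not automatic; your sentence that ``the short root subgroups already suffice to connect any two vectors'' is precisely the content that needs to be proved, and proving it amounts to doing the explicit orbit computation anyway. So either you should drop the appeal to Witt and derive transitivity the way the paper does (from the exhaustion of the quadric by orbits of explicit representatives), or you need a separate reference/argument specific to $G_2$. On the positive side, your weight-theoretic justification that the two sign branches $x_4=\pm a$ are not fused by $Q(F)$ — noting that $x_4$ sits in the zero weight space, so $L$ acts on it trivially and $V$ preserves it on the stratum where $x_6=x_7=x_8=0$ — is a clean way to make precise what the paper dismisses as ``readily seen,'' though the phrase ``an even multiple of the fundamental character'' is a detour: since the weight of $x_4$ is zero and $\pm\beta$ is not a weight of the seven-dimensional representation, $L$ fixes $x_4$ outright.
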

\begin{proof}
It's clear that each element $\b x $ of $G_2(F) \cdot v_\rho$ 
satisfies $_t \b x \cdot \b x = 2\rho.$ 
Let $v_\rho' = {}^t\bbm \rho& 0& \dots & 0 & 1 \ebm.$ 
Then a suitable representative for $s_\alpha s_\beta s_\alpha$ 
maps $v_\rho$ to $v_\rho'.$ One checks that the second, third, fourth, sixth, and seventh elements 
of the last column of an element of the standard maximal unipotent subgroup of $G_2(F)$ can be chosen 
arbitrarily and it follows that the $Q(F)$ orbit of $v_\rho'$ (and also of $v_\rho$) contains every element of 
$\{ \b x = \bbm x_1& \dots & x_8 \ebm \in O_\rho:  x_8 \ne 0\}.$ 
On the other hand, this subset is clearly $Q(F)$-stable, so it is the $Q(F)$ orbit of $v_\rho'.$ 
Likewise, a suitable representative of $s_\beta s_\alpha$ maps $v_\rho$ to 
$v_\rho''=^t\bbm 0&\rho&0& \dots& 0 & 1 &0 \ebm$ and the orbit of $v_\rho''$ under the Borel of $G_2(F)$ is 
readily seen to be all $\b x$ with $_t \b x \cdot \b x = 2\rho, \ x_8 = 0, \ x_7 \ne 0.$ Clearly, each vector with $x_6, x_7$
not both zero is $Q(F)$-equivalent to one with $x_7 \ne 0.$ It follows that 
$\{ \b x : \ _t \b x \cdot \b x = 2\rho, \ x_8 = 0, \{ x_6, x_7\} \ne 0 \}$ is the $Q(F)$ orbit of $v_\rho''.$ 
Recall that $\b x \in V_0$ forces $x_4 = x_5.$ If $x_6=x_7=x_8= 0$ we get 
$_t \b x \cdot \b x = 2x_4^2,$ so that $\b x : \ _t \b x \cdot \b x = 2\rho \implies \rho = x_4^2.$ If $\rho$ is 
not a square this is impossible. It follows that the two $Q(F)$ orbits already described exhaust all vectors 
with $\b x : \ _t \b x \cdot \b x = 2\rho.$ If $\rho = a^2$ we have the two additional subsets described. Each is 
readily seen to be a $Q(F)$-orbit. Let $X_{-\alpha}$ be the matrix with a one at positions $(2,1), (4,3)$ and $(5,3),$
a $-1$ at positions $(8,7), (6,5)$ and $(6,4),$ and zeros everywhere else. It spans the root subspace for the root $-\alpha.$
Let $x_{-\alpha}(r) = \exp( r X_{-\alpha}).$ (This is a polynomial formula, because $X_{-\alpha}$ is nilpotent.) 
Then 
$$
x_{-\alpha}(\pm a) \cdot v_{a^2} = \ ^t \bbm 0&0&1&\pm a& \pm a & 0&0&0\ebm,
$$
which proves that our two additional $Q(F)$-orbits are still in the $G_2(F)$-orbit of $v_{a^2}.$ Clearly, the four taken together 
exhaust $ \{ \b x \in V_0:  \ _t\b x \cdot \b x = 2\rho\}.$ This completes the proof. 
\end{proof}
\begin{cor}
If $\rho$ is not a square, then $\{e, s_\alpha s_\beta\}$ is a set of representatives for $Q(F) \bs G_2(F) / H_\rho(F).$
If $\rho = a^2,$ then $\{e, s_\alpha s_\beta, x_{-10}(a), x_{-10}(-a)\}$ is a set of representatives.
\end{cor}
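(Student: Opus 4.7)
The plan is to read the corollary off from the preceding lemma via the $G_2(F)$-equivariant bijection
$$
G_2(F)/H_\rho(F) \xrightarrow{\sim} O_\rho, \qquad g H_\rho(F) \mapsto g \cdot v_\rho,
$$
under which the double coset set $Q(F)\bs G_2(F)/H_\rho(F)$ corresponds to the set of $Q(F)$-orbits on $O_\rho$. The preceding lemma has already enumerated these orbits---two in the non-square case, four in the square case---so all that remains is to verify that each of the proposed representatives $g$ has image $g \cdot v_\rho$ in a distinct orbit, and that together they hit every orbit.

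Three observations suffice. First, $e \cdot v_\rho = v_\rho$ has eighth coordinate $0$ and sixth coordinate $\rho \ne 0$, so it lies in the orbit consisting of vectors in $O_\rho$ with $x_8 = 0$ (in the square case, more precisely in the orbit with $x_8 = 0$ and $\{x_6, x_7\} \ne \{0\}$). Second, I would choose an explicit lift of $s_\alpha s_\beta$ to $N_{G_2}(T)(F)$ from the fixed Chevalley realization and compute its action on $v_\rho$ directly---in much the same spirit as the proof of the lemma for $s_\beta s_\alpha$ and $s_\alpha s_\beta s_\alpha$---verifying that the resulting vector has nonzero eighth coordinate and so represents the open orbit $\{x_8 \ne 0\}$. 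Third, the formula ${}^t\bbm 0 & 0 & 1 & \pm a & \pm a & 0 & 0 & 0 \ebm$ for $x_{-\alpha}(\pm a) \cdot v_{a^2}$ established at the end of the lemma's proof identifies the remaining two representatives with the two extra orbits that appear in the square case.

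The main (and essentially only) obstacle is the explicit computation of $s_\alpha s_\beta \cdot v_\rho$. Once $8 \times 8$ matrix representatives of the simple reflections $s_\alpha$ and $s_\beta$ are written down via $x_\gamma(1) x_{-\gamma}(-1) x_\gamma(1)$ for $\gamma \in \{\alpha, \beta\}$ (using the realization of $G_2 \hookrightarrow SO_8$ fixed in the lemma), this reduces to a pair of matrix multiplications applied to the column vector $v_\rho$, after which one simply reads off the last coordinate. Since the resulting pair (respectively quadruple) of images lies in two (respectively four) pairwise distinct $Q(F)$-orbits matching the lemma's enumeration, the claimed list is exhaustive and gives a complete set of double coset representatives.
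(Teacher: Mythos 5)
Your approach is correct and is essentially identical to the paper's own one-sentence proof: let each proposed representative act on $v_\rho$ and match the image against the lemma's explicit description of the $Q(F)$-orbits in $O_\rho$. The one computation you defer---verifying that $s_\alpha s_\beta \cdot v_\rho$ has nonzero eighth coordinate so that it represents the open orbit $\{x_8 \neq 0\}$---is indeed the only nontrivial verification left, and it goes through as you anticipate.
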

\begin{proof}
Let each element act on $v_\rho$ and consider the explicit description of the $Q(F)$-orbits in $O_\rho.$
\end{proof}

\subsection{A certain period}
Let $\varphi_{32}: SL_2 \to G_2$ be the mapping 
$$
\bpm a& b \\ c & d \epm \mapsto 
\bpm
a&&&&&&b&\\
&a&&&&&&b\\
&&1&&&&&\\
&&&1&&&&\\
&&&&1&&&\\
&&&&&1&&\\
c&&&&&&d&\\
&c&&&&&&d\\
\epm.
$$
The image is contained in $H_\rho$ for every $\rho.$  (Cf. (2) on p. 195 of \cite{su21adjoint}.)  If we embed $H_\rho(F)$ into $GL_3(E)$ using the identification 
 $E^3$ with the six dimensional subspace of $F^8$ stabilized by $H_\rho$ as in \cite{su21adjoint} remarks 2, then $\varphi_{32}$ corresponds to the mapping 
 $$\bpm a&b\\c&d \epm \mapsto 
 \bpm a&& \frac{b}{2\tau}\\ &1&\\
 2\tau c && d 
\epm,$$
where $\tau \in E$ satisfies $\tau^2 = \rho.$  
From this point of view, the image of $\varphi_{32}$ is a smaller special unitary group, and we denote in $H'_\rho.$ 
We consider the period 
\begin{equation}\label{eq: period}
\c P(\varphi) = \iq{H'_\rho} \varphi(h) \, dh.
\end{equation}
We say that an automorphic representation $\pi$ is $H'_\rho$-distinguished if $\c P$ does not vanish identically on it.

\begin{prop}
\label{prop: E_Q orth to G cuspforms}
Let $h$ be a flat section of $\c I_Q^{G_2}(\chi_0)$ and $\varphi$ a cuspform defined on $H_\rho(\A).$
Assume that $s \in \X_{Q, \un}$ is not a pole 
of $E_Q.$ Then for a suitable measure on the space 
$H'_\rho(\A)\bs H_\rho(\A)$ we have
\begin{equation}\label{EQ Eis unfolded}
\iq{H_\rho} \varphi(g) E_Q.h(g, s) \, dg = \int\limits_{H'_\rho(\A)\bs H_\rho(\A)} h(s_\alpha s_\beta g, s) \c P( R(g).\varphi) \, dg.
\end{equation}
(Here, $R$ denotes right translation.) In particular, the integral vanishes identically on any cuspidal automorphic representation which 
is not $H'_\rho$-distinguished. If $H_\rho$ is split, then no cuspidal automorphic representation is $H'_\rho$-distinguished, 
and the integral \eqref{EQ Eis unfolded} is always zero.
\end{prop}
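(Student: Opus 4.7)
The strategy is to unfold $E_Q.h$ against the double coset decomposition $Q(F) \bs G_2(F)/H_\rho(F)$ computed in the preceding corollary. Writing
$$
\iq{H_\rho} \varphi(g) E_Q.h(g,s)\, dg
= \sum_{\gm} \int\limits_{\Gm_\gm(F) \bs H_\rho(\A)} h(\gm g, s)\, \varphi(g)\, dg, \qquad \Gm_\gm := \gm^{-1} Q(F) \gm \cap H_\rho(F),
$$
where $\gm$ ranges over $\{e, s_\alpha s_\beta\}$ in the nonsplit case and over $\{e, s_\alpha s_\beta, x_{-10}(\pm a)\}$ in the split case. The plan is then to dispose of every orbit except $\gm = s_\alpha s_\beta$ by cuspidality, to identify the $s_\alpha s_\beta$ stabilizer with $H'_\rho,$ and finally to rule out a distinguished cusp form in the split case.

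For $\gm = e,$ the subgroup $Q \cap H_\rho$ strictly contains a nontrivial unipotent subgroup $N \subset V \cap H_\rho$ (the long-root unipotent cut out of $H_\rho$); since $h(\cdot,s)$ is left-invariant under the unipotent radical $V(\A),$ we may factor the inner integration over $N(F) \bs N(\A)$ and invoke cuspidality of $\varphi$ along this unipotent radical of a proper parabolic of $H_\rho$ to get zero. In the split case, the same mechanism kills the $x_{-10}(\pm a)$ contributions: conjugation by $x_{-10}(\pm a)$ brings a nontrivial unipotent subgroup of $H_\rho$ into $V,$ and a Fourier expansion along this unipotent annihilates $\varphi.$ Carrying out these computations explicitly amounts to tracking the root subgroups of $G_2$ that survive intersection with $H_\rho$ after conjugation, using the embeddings in \cite{su21adjoint}.

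For $\gm = s_\alpha s_\beta,$ one verifies directly from the matrix form of $\varphi_{32}$ and the description of $Q = LV$ that $s_\alpha s_\beta H'_\rho (s_\alpha s_\beta)^{-1} \subset Q,$ and that $\chi_0 \cdot \delta_Q^{1/2}$ is trivial on this conjugate (the character factors through $L/L_{\der}$ while the conjugate of $H'_\rho$ lies in the derived group of $L$ times $V$). This yields $\Gm_{s_\alpha s_\beta} = H'_\rho(F)$ and entitles $h(s_\alpha s_\beta g, s)$ to be treated as left-$H'_\rho(\A)$-invariant in $g,$ at which point Fubini gives the claimed identity \eqref{EQ Eis unfolded}, with $\c P$ as the inner integration over $H'_\rho(F)\bs H'_\rho(\A).$

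For the final assertion in the split case, I would recall (as in \cite[Proposition~4.6]{Ginzburg-Jiang}) that when $H_\rho \cong SL_3$ the subgroup $H'_\rho \cong SL_2$ sits as a long-root $SL_2$ whose normalizer in $SL_3$ contains a unipotent one-parameter subgroup not lying in $H'_\rho.$ Expanding $\c P(R(g)\vph)$ in Fourier series along this extra unipotent and using cuspidality of $\vph$ (the constant term vanishes, and the nonconstant Fourier coefficients, after a change of variables absorbed into the integration over $H'_\rho(\A)\bs H_\rho(\A),$ collapse to integrals of Whittaker-type coefficients which cancel) forces $\c P \equiv 0$ on cusp forms, and hence the right-hand side of \eqref{EQ Eis unfolded} vanishes identically. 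The main technical obstacle, and the step where the most care is required, is the orbit-by-orbit computation of the stabilizers $\Gm_\gm$ in terms of explicit root data, since the same conjugations must simultaneously produce unipotent radicals for the vanishing orbits and exactly $H'_\rho$ for the surviving one.
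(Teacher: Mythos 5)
Your unfolding along $Q(F)\backslash G_2(F)/H_\rho(F)$, the use of cuspidality to kill the orbits $\gamma = e$ and $\gamma = x_{-\alpha}(\pm a)$, and the identification of the $s_\alpha s_\beta$-stabilizer with $H'_\rho$ together with triviality of the inducing character on the conjugate of $H'_\rho$, all match the paper's proof and are sound. However, your argument for the final assertion (that $\c P \equiv 0$ on cusp forms of split $H_\rho \cong SL_3$) contains a genuine error. You assert that the normalizer in $SL_3$ of the long-root $SL_2 \cong H'_\rho$ contains a unipotent one-parameter subgroup not lying in $H'_\rho$. This is false: the normalizer of a block $SL_2$ (embedded in rows/columns $1$ and $3$) is $S(GL_2 \times GL_1)$ in those coordinates, and any unipotent element of this group acts trivially on the $GL_1$ factor and therefore already lies in $SL_2 = H'_\rho$. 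So the ``extra unipotent'' along which you propose to Fourier-expand does not exist, and the rest of the sketch (``collapse to integrals of Whittaker-type coefficients which cancel'') has no content without it. There is also a logical gap: you are expanding the outer integral $\int_{H'_\rho(\A)\backslash H_\rho(\A)} h(s_\alpha s_\beta g, s)\,\c P(R(g)\varphi)\,dg$ but then conclude that $\c P$ itself vanishes, which does not follow from vanishing of the integral.

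The paper handles this step differently and more directly: in the split case $H'_\rho$ is conjugate inside $H_\rho \cong SL_3$ to the derived group of a standard Levi subgroup of a maximal parabolic, and the period of a cusp form over (the derived group of) a Levi vanishes --- this is the content of \cite[Proposition 4.6]{Ginzburg-Jiang} and is where the Fourier expansion along the unipotent radical $U'$ of the associated parabolic (a subgroup \emph{normalized by} $H'_\rho$, not normalizing it) and a subsequent collapsing of the sum actually enter. If you want to reconstruct that argument, the correct starting point is to Fourier-expand $\varphi$ along $U'$, use cuspidality to drop the trivial coefficient, collapse the sum over nontrivial characters using the transitive action of $SL_2(F)$ on them, and then argue that the remaining integral over the stabilizer vanishes. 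As written, your final step needs to be replaced.
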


\begin{proof}
For $s$ in the domain of convergence for $E_Q,$ our integral is equal to 
$$
\sum_{ \gm \in Q(F) \bs G_2(F) / H_\rho(F) }\int_{H_\rho(F) \cap \gm^{-1} Q(F) \gm \bs H_\rho(\A)} h(\gm g,s) \varphi(g) \, dg.
$$ 
If $\gm$ is the identity, or if $\rho = a^2$ and $\gm = x_{-\alpha}(\pm a)$ then $H_\rho \cap \gm^{-1} Q \gm$ contains the unipotent radical of 
the standard  Borel subgroup of $H_\rho.$ Hence the function $g \mapsto h(\gm g,s)$ is invariant by this subgroup on the left, 
and the integral over $H_\rho(F) \cap \gm^{-1} Q(F) \gm \bs H_\rho(\A)$ factors through the mapping that sends $\varphi$ to its constant
term-- which is zero. If $\gm = s_\alpha s_\beta,$ then 
$H_\rho \cap \gm^{-1} Q \gm =H'_\rho,$
and $\gm H_\rho' \gm^{-1}$ is contained in the derived group of the Levi $L$ of $Q.$ 
Hence the function $g \mapsto f(s_\alpha s_\beta g, s)$ is invariant by $H_\rho'(\A)$ on the right.
  We are reduced to showing that 
$$
\int\limits_{H'_\rho(F) \bs H_\rho(\A)}  h(s_\alpha s_\beta g, s) \varphi(g) \, dg 
= \int\limits_{H'_\rho(\A) \bs H_\rho(\A)} h(s_\alpha s_\beta g_1, s) \iq{H'_\rho}\varphi(h'_\rho g_1)\, dh'_\rho \, dg_1 
$$
for a suitable measure $dg_1.$ Here, $dg$ and $dh'_\rho$ are the Haar measures on the respective groups. 
  The existence of a suitable 
invariant measure on each of the corresponding {\it local} homogeneous spaces $H'_\rho(F_v)\bs H_\rho(F_v)$ follows from proposition 
4.3.5 of \cite{Bump-GreyBook}. The existence of $dg_1$ then follows easily from expressing $dg$ and $dh'_\rho$ in terms of 
the Haar measures on the local groups.

If $H_\rho$ is split, then $H'_\rho$ is a nonstandard Levi subgroup and is conjugate to a standard Levi subgroup. It suffices to show that the 
period along the standard Levi subgroup vanishes. As noted in \cite{Ginzburg-Jiang} this follows easily from cuspidality.
\end{proof}
\begin{rmk}
We may regard $H'_\rho$ as $SU_{1,1}  \subset U_{1,1}  \subset U_{2,1}$ and one could 
enlarge our period $\c P,$ to an integral over $\quo{U_{1,1}}$ against a character. Such periods are considered in 
 \cite{GeRoSo}, where they are used to characterize the image of the theta lifting.  
 Recall that the $\Ad'$ $L$ function of a representation in the image of this lifting should have a pole at $s=1.$ 
 So, the emergence of the $H'_\rho$ period as an obstruction to proving holomorphy in general 
makes perfect sense.
\end{rmk}

\subsection{Conclusions regarding poles of global zeta integrals}
\begin{thm}
Suppose that $\varphi$ generates a cuspidal automorphic representation which is not $H'_\rho$ distinguished. 
Then the global zeta integral $I(s, \varphi, f)$ defined in \eqref{eq: def of I(s, vph, f)} 
has no poles in the half plane $\Re(s) \ge \frac 12,$ except for 
a simple pole
at $s=\frac 23$ which can occur only 
when $\chi_0$ is cubic. 
Moreover, if $\chi_0$ is quadratic, then $I(\frac 12, \varphi, f )=0$ for all $\varphi$ and $f.$ 
\end{thm}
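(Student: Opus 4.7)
The strategy is the Ginzburg--Jiang approach: since $\varphi$ is a cusp form, $I(s,\varphi,f)$ inherits its poles from $E_P.f(\cdot,s)$, and the tabulation in Section~2.8 shows that in $\Re(s)\geq \tfrac12$ the only candidate poles of $E_P.f$ are a simple pole at $s=1$ and a double pole at $s=\tfrac23$ (when $\chi_0$ is trivial), or a simple pole at $s=\tfrac23$ (when $\chi_0$ is quadratic or cubic). It suffices to identify each offending Laurent coefficient of $E_P.f$ with either a constant function on $G_2(F)\bs G_2(\A)$ or with an $E_Q$-Eisenstein series, and then invoke cuspidality of $\varphi$ or Proposition~\ref{prop: E_Q orth to G cuspforms} to pair it to zero.

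For cubic $\chi_0$ there is nothing to prove: only the allowed simple pole at $s=\tfrac23$ remains. For quadratic $\chi_0$, I would invoke Theorem~\ref{thm: identity, quad case} to identify the residue of $E_P.f$ at $s=\tfrac23$ with the residue of a companion Eisenstein series $E_Q.h$, where $h$ is constructed from $f$ by the flat-section / intertwining-operator procedure of Section~3. By Proposition~\ref{prop: E_Q orth to G cuspforms} the non-distinguished hypothesis kills this residue against $\varphi$, so $I$ is holomorphic at $s=\tfrac23$. The vanishing $I(\tfrac12,\varphi,f)=0$ is immediate from Proposition~\ref{prop: vanishing at 1/2 when quad}: the operator $E_P([-1,3s-1+\chi_0])$ itself vanishes identically at $s=\tfrac12$.

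The trivial $\chi_0$ case is the crux, with three items to rule out. At $s=1$ the residue of $E_P.f$ is a constant function on $G_2(F)\bs G_2(\A)$ by Ginzburg--Jiang, against which $\varphi$ integrates to zero. At $s=\tfrac23$ the double-pole Laurent coefficient of $E_P.f$ is an $E_Q$-Eisenstein series by the Ginzburg--Jiang first-term identity, again killed by Proposition~\ref{prop: E_Q orth to G cuspforms}. The remaining simple pole at $s=\tfrac23$ is handled by the second-term identity of Theorem~\ref{thm: id of unr eis ser}: setting $u = 3s-2$, the identity asserts
\[
E_Q f^\circ_{[3u+2,-1]} \;=\; \tfrac13 c(u)\,c(3u-1)\,E_P f^\circ_{[-1,u+1]} + O(u), \qquad u\to 0.
\]
Integrating against $\varphi$ over $\quo{H_\rho}$, the left-hand side vanishes identically as a meromorphic function of $u$ by Proposition~\ref{prop: E_Q orth to G cuspforms}, so $c(u)c(3u-1)\cdot I(\tfrac{u+2}{3},\varphi,f)$ is $O(u)$ as $u\to 0$. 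Since $c(u)c(3u-1)$ has a simple zero at $u=0$ with nonzero leading coefficient $3/c_{-1,1}$, this forces $I(s,\varphi,f)$ to be holomorphic at $s=\tfrac23$, killing both the putative double pole and the surviving simple pole simultaneously.

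The main obstacle is that the identities of Section~3 are proved only for spherical sections $f^\circ$, while the theorem concerns arbitrary flat $f$. Both the first- and second-term identities must be promoted to assertions involving companion flat $Q$-sections $h$ attached to $f$ via the intertwining operators realizing $w_5 = s_\beta s_\alpha s_\beta s_\alpha s_\beta$, in the spirit of Proposition~\ref{prop: flat sections B to flat sections P} and the proof of Theorem~\ref{thm: identity, quad case}. Once this promotion is carried out, the argument above applies verbatim with $h$ in place of $f^\circ$.
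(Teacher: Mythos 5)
Your proof follows the paper's own approach closely: the pole locations are taken from Theorems~\ref{thm: poles of EP, chi0 trivial} and~\ref{thm: poles in cubic and quadratic case}; the nontrivial quadratic case is disposed of via Theorem~\ref{thm: identity, quad case} and Proposition~\ref{prop: E_Q orth to G cuspforms}; the vanishing at $s=\tfrac12$ comes from Proposition~\ref{prop: vanishing at 1/2 when quad}; and the pole at $s=1$ and the double pole at $s=\tfrac23$ in the trivial case are killed by the Ginzburg--Jiang arguments (constant residue at $s=1$, first-term identity at $s=\tfrac23$). All of these parts match the paper's proof.

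However, the final step of your trivial-$\chi_0$ analysis rests on a misreading of the statement. The theorem does \emph{not} claim that $I(s,\varphi,f)$ is holomorphic at $s=\tfrac23$ when $\chi_0$ is trivial. By the remark that immediately follows it, the trivial character is counted as cubic, so a simple pole at $s=\tfrac23$ is permitted in the trivial case exactly as in the nontrivial cubic case; the paper's proof only rules out the pole at $s=1$ and a \emph{double} pole at $s=\tfrac23$ when $\chi_0$ is trivial and then stops. Your attempt to remove the residual simple pole via Theorem~\ref{thm: id of unr eis ser} is therefore not required for this theorem, which is fortunate, because --- as you correctly flag --- that identity is stated only for the spherical vectors $f^\circ_{[a,b]}$, and no flat-section version of the second-term identity is established in the paper. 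The residual simple pole at $s=\tfrac23$ (i.e., at $L$-function argument $1$) is dealt with in Theorem~\ref{thm: main} by an entirely different route, via $L(s,\pi,\Ad\otimes\chi)=L(s,(\pi\otimes\chi)\times\wt\pi)/L(s,\chi)$ and Moeglin--Waldspurger, precisely because the flat-section promotion you would need is not available here. (One further small omission: the theorem also asserts $I(\tfrac12,\varphi,f)=0$ when $\chi_0$ is trivial, since trivial is quadratic; your quadratic-case argument via Proposition~\ref{prop: vanishing at 1/2 when quad} does cover this, as that proposition only requires $2\chi_0=0$, but you should state it.)
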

\begin{rmk}
For these purposes the trivial character is both cubic and quadratic.
\end{rmk}
\begin{proof}In the case when $\chi$ is unramified this is a slight 
refinement of proposition 4.6 of \cite{Ginzburg-Jiang}.
We know from theorems \ref{thm: poles of EP, chi0 trivial} and \ref{thm: poles in cubic and quadratic case}
that the only possible poles in $\Re(s)\ge \frac 12$ occur at $s=\frac 23$ when $\chi_0$ is trivial, quadratic, or 
cubic, 
and at $s= 1$ when $\chi_0$ is trivial. 
When $H_\rho$ is split and $\chi_0$ is trivial, the possibility of a pole at $s=1$ or a double pole 
at $s=2/3$ are ruled out by \cite{Ginzburg-Jiang}.
When $\chi_0$ is quadratic, the possibility of a pole at $\frac 23$ is ruled out 
by the same argument, using theorem 
\ref{thm: identity, quad case} and proposition \ref{prop: E_Q orth to G cuspforms}.
The vanishing at $\frac 12$ when $\chi_0$ is quadratic 
follows from proposition \ref{prop: vanishing at 1/2 when quad}.
\end{proof}

\section{Nonvanishing of local zeta integrals}
In this section we prove that for any fixed $s_0 \in \C$ there is a choice of data such that the 
local zeta integral for the adjoint $L$ function is nonzero at $s_0.$ It then follows that any 
pole of the partial adjoint $L$ function would give rise to a pole of the global zeta integral, 
except possibly at poles of the ``normalizing factor''
$L^S(3s, \chi)L^S(6s-2, \chi^2) L^S(9s-3, \chi).$
We take up some notations from \cite{su21adjoint}: 
 for fixed $\rho \in F,$ $H_\rho$ is a subgroup 
 of $G_2$ isomorphic to $SL_3$ 
 over any field in which $\rho$ is a square.
 We equip it with a choice of Borel subgroup 
 $B=TN$ 
 where $T$ is a maximal torus and $N$ is 
 a maximal unipotent. (This departs from our previous usage of $B$ and $T$ for the Borel and torus of $G_2.$)
  Also $w_2$ is the second simple reflection 
 in $G_2$ (attached to the long simple root), 
 and $N_2$ is a two dimensional unipotent subgroup, 
 with the property that 
 $H_\rho \cap w_2^{-1} P w_2 = N_2 T_\s,$
 where $T_\s$ denotes the one dimensional 
 maximal $F$-split torus contained in the standard 
 Borel of $H_\rho.$ 
 Finally, $\psi_N$ is a certain generic character
 of $N.$ 
 Details are found in \cite{su21adjoint}.
It is convenient to identify $\X_{M,\un}$ with $\C$ via the map 
$s \mapsto \delta_P^s.$

For $\rho \in F,$ $\pi$ a irreducible admissible $\psi_N$-generic representation of $H_\rho,$  
$W$ in the $\psi_N$-Whittaker model $\c W_{\psi_N}(\pi)$ 
of $\pi,$ 
and 
$f \in \Flat(\chi_0)$ we define 
$$
I(W, f; s)= \int_{N_2\bs H_\rho} W(g) f_s( w_2 g) \, dg.
$$
\begin{thm}\label{thm: nonvanishing of local zetas}
For any $\rho \in F^\times,$ any irreducible admissible generic representation $\pi$ of $H_\rho,$
any  $\chi_0 \in \X_{M,0},$ 
and any fixed $s_0 \in \C,$
there exist $W \in \c W_{\psi_N}(\pi)$
and $f \in \Flat(\chi_0)$
such that 
$I(W, f; s_0) \ne 0.$ 
\end{thm}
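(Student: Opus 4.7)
The plan is to give a standard local ``bump function'' argument: reduce the integral to the open Bruhat-type cell in $N_2 \bs H_\rho,$ exploit the freedom in choosing the $K$-restriction of a flat section so that $f_{s_0}(w_2 \cdot)$ becomes essentially an arbitrary compactly supported test function on the cell, then invoke the nonvanishing of the Whittaker functional. This approach sidesteps any convergence issues, since a bump-supported $f$ makes the integrand compactly supported on $N_2 \bs H_\rho$ and hence makes $I(W, f; s_0)$ absolutely convergent for any $s_0.$

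First I would use the relation $H_\rho \cap w_2^{-1} P w_2 = N_2 T_\s$ to select a smooth closed subvariety $\bar N \subset H_\rho$ containing $e$ of complementary dimension such that the product map $N_2 T_\s \times \bar N \to H_\rho$ is \'etale at $(e,e).$ Then $N_2 T_\s \bar N$ is Zariski open and dense in $H_\rho,$ its complement has measure zero, and parametrizing $g = n_2 t \bar n$ on this cell yields
\begin{equation*}
I(W, f; s_0) = \int_{T_\s} \int_{\bar N} W(t \bar n) \, f_{s_0}(w_2 t \bar n) \, |J(t,\bar n)| \, d\bar n \, dt
\end{equation*}
for a smooth Jacobian $|J|$ nonvanishing near $(e,e).$ The integrand is $N_2$-invariant on the left thanks to $N_2 \subset \ker \psi_N$ and $w_2 N_2 w_2^{-1} \subset U.$ Since $w_2 t w_2^{-1} \in P$ for $t \in T_\s,$ the $P$-equivariance of $f_{s_0}$ factors the $t$-dependence out as an explicit character $\xi(t),$ so that $f_{s_0}(w_2 t \bar n) = \xi(t) f_{s_0}(w_2 \bar n).$

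Next, using the Iwasawa decomposition $G_2(F_v) = P(F_v) K_v,$ a flat section $f$ is determined by $f|_{K_v},$ which may be taken to be any smooth (resp.\ locally constant) section of the line bundle associated to the character $(\chi_0 + \rho_P)|_{P \cap K_v}.$ Because the map $\bar n \mapsto k(w_2 \bar n) \in K_v$ is an open embedding on a neighborhood of $e \in \bar N,$ as $f$ varies over $\Flat(\chi_0)$ the restriction $\bar n \mapsto f_{s_0}(w_2 \bar n)$ realizes every smooth (resp.\ locally constant) compactly supported function on a neighborhood of $e \in \bar N.$ A similar open-cell analysis, carried out in $G_2$ near $w_2 t_0 \bar n_0$ for fixed $(t_0, \bar n_0),$ yields a matching flexibility in the $t$-direction. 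Hence we can realize, as the restricted integrand, an arbitrary smooth compactly supported function of $(t, \bar n)$ near $(e,e).$

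To conclude, by genericity of $\pi$ choose $W \in \c W_{\psi_N}(\pi)$ with $W(e) \ne 0,$ and by continuity find a neighborhood $U$ of $(e,e) \in T_\s \times \bar N$ on which $W(t \bar n)$ is nonzero of constant sign. Choose $f$ so that $(t, \bar n) \mapsto f_{s_0}(w_2 t \bar n)|J(t,\bar n)|$ is a nonnegative bump function supported in $U$ with strictly positive total integral; then $I(W, f; s_0) \ne 0.$ The main technical obstacle is the rigorous verification of surjectivity of the restriction map $f \mapsto f_{s_0}(w_2 \cdot)$ onto enough test functions on the open cell, especially at archimedean places where the ``bump'' must be a smooth section of a line bundle rather than a compactly supported locally constant function; this is handled by standard partition-of-unity and Iwasawa-parametrization arguments. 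The only genuinely $G_2$-specific input is the construction of $\bar N$ and the verification that $|J|$ does not vanish at the identity, which both reduce to a direct root-space calculation in $\f g_2.$
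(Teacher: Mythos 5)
Your proposal follows the paper's general strategy (open-cell parametrization, flexibility in the flat section), but it contains a genuine gap exactly at the point where you claim freedom of $f$ in the $t$-direction. You correctly observe that $w_2 t w_2^{-1} \in P$ for $t \in T_\s$, and hence that $f_{s_0}(w_2 t \bar n) = \xi(t) f_{s_0}(w_2 \bar n)$ for a fixed character $\xi.$ But this factorization means precisely that $f$ has \emph{no} freedom in the $t$-direction: once the $K$-restriction of $f$ is fixed, the $t$-dependence of the integrand along $T_\s$ is the predetermined character $\xi$, and the map $(t, \bar n) \mapsto (\text{Iwasawa } K\text{-component of } w_2 t \bar n)$ collapses the $t$-direction entirely. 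So your later claim that ``we can realize, as the restricted integrand, an arbitrary smooth compactly supported function of $(t, \bar n)$'' contradicts the equivariance relation you just wrote down. As a result, the ``bump function supported in $U$'' you want in the final paragraph cannot be realized by any choice of $f$, and the concluding step fails. The issue is not cosmetic: after using the freedom of $f$ to concentrate the inner variables, one is left with an integral of the shape $\int_{T_\s} W'(t_\s)\, t_\s^{\xi}\, dt_\s$ for some Whittaker function $W'$, over the noncompact split torus $T_\s$, and this can a priori vanish.

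The paper's proof overcomes exactly this obstruction, and it is the essential extra idea. After reducing to $I'(W, s_0) = \int_{T_\s} W(t_\s)\, t_\s^{w_2\chi_0 + w_2 s_0 - 3\gamma}\, dt_\s$, it convolves $W$ by $\phi_2 \circ x$, where $\phi_2$ is a Schwartz function and $x: F \hookrightarrow N$ is an embedding along a root direction satisfying $t_\s x(a) t_\s^{-1} = x(t_\s^\gamma a)$ and $\psi_N(x(a)) = \psi(a).$ The equivariance and the genericity character together produce a factor $\widehat{\phi_2}(t_\s^\gamma)$ inside the $T_\s$-integral, and since $\widehat{\phi_2}$ is an arbitrary Schwartz function (so in particular can be a bump supported near a point where $W|_{T_\s} \not\equiv 0$), the integral can be made nonzero. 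In other words, the freedom in the $t$-direction comes from acting on $W$ by convolution and Fourier transform, not from $f.$ To repair your proof you would need to import this step (or an equivalent argument), after which the rest of your bump-function argument for the $\bar n$-direction, and the density of $K$-finite vectors at archimedean places, is fine and matches the paper.
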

\begin{proof}
Expressing the Haar measure on $H_\rho$ 
as a suitable product measure on the open Bruhat cell 
yields
$$
I(W, f; s) = \int_{N_2 \bs N} \int_{T} \int_{N^-} W( ntn^-)
f_s( wntn^{-}) \delta_B^{-1}(t) \, dn^-\, dt\, dn.
$$
We may  identify $N_2\bs N$ with 
the complementary subgroup
$$
N_2' := \left\{ \begin{pmatrix} 1&r&-\frac{r^2}2\\ &1&-r\\&&1\end{pmatrix}: r \in F \right \},
$$
which is $T_\s$-stable,
and fix a fundamental domain $[T_\s \bs T]$ 
for $T_\s$ in $T,$ to express $I(W, f; s)$ 
as 
$$
\int_{N_2'} \int_{T_\s}\int_{[T_\s\bs T]} \int_{N^-} W( nt_\s t'n^-)
f_s( w_2nt_\s t'n^{-}) \delta_B^{-1}(t_\s t') \, dn^-\, dt'\, dt_\s\,dn$$
$$=  \int_{T_\s}t_\s^{w_2 \chi_0 + w_2 s -3 \gm}
\II(W, f; t_\s, s)\, d t_\s
,$$
where $\gm$ is the common restriction 
of the two simple roots of $H_\rho$ to $T_\s$
(so that $\delta_B^{-1} (t_\s) = t_\s^{-4\gm}$)
and 
$$\II(W, f; t_\s,s):=
\int_{N_2'}\int_{[T_\s\bs T]} \int_{N^-} W( t_\s nt'n^-)
f_s( w_2nt'n^{-}) \delta_B^{-1}(t') \, dn^-\, dt'\, dn.
$$
Note that 
the function $(n, t', n^- ) \mapsto P w_2 n t' n^-$ 
is a continuous injection of $N_2' \times [T_\s \bs T]
\times N^-$ into $P \bs G_2.$ 
Now fix 
 $s_0,$ and let $\phi_f(n, t', n^- ):= \delta_B(t')^{-1}f_{s_0}( w_2nt'n^{-}),$
 which we view 
as a ``test function'' on $N_2' \times [T_\s \bs T]
\times N^-.$  First assume  that $F$ is nonarchimedean.
Then for any smooth function $\phi_1$ of compact
support defined on $N_2' \times [T_\s \bs T]
\times N^-$ we can choose $f$ so that $\phi_f = \phi_1.$ 
But then 
$$
I(W, f; s_0) = 
I'(\phi_f*W; s_0),
$$
where $*$ denotes the action by convolution and 
$I'(W, s)$
is defined for $W\in \c W_{\psi_N}(\pi)$ and $ s\in \C$
by $$
I'(W, s) := \int_{T_\s} W(t_\s) t_\s^{w_2 \chi_0 + w_2 s -3 \gm}
\, dt_\s.
$$
Hence $I(W, f; s_0)$ vanishes 
for all $f \in \Flat(\chi_0), W \in W_{\psi_N}(\pi)$ if and only if $I'(W; s_0)$ vanishes for all $W \in W_{\psi_N}(\pi).$

But now let 
$\phi_2$ be a Schwartz
function on $F$ and $x: F \to H_\rho$ an embedding into $N$ 
chosen so that 
$t_\s x(a) t_\s^{-1} = x(t_\s^\gm a)$ and 
$\psi_N(x(a)) = \psi(a).$ 
Then 
$$\begin{aligned}
I'((\phi_2\circ x)*W, s)&= \int_{T_\s} \int_F W(t_\s x(a))  t_\s^{w_2 \chi_0 + w_2 s -3 \gm}\phi_2(a) \, da \, dt_\s\\&= \int_{T_\s} \int_F W(t_\s)  t_\s^{w_2 \chi_0 + w_2 s -3 \gm}\psi(t_\s^\gm a)\phi_2(a) \, da \, dt_\s\\
& = \int_{T_\s}  W(t_\s)  t_\s^{w_2 \chi_0 + w_2 s -3 \gm}\widehat \phi_2(t_\s^\gm) \, dt_\s
\end{aligned}
$$
Notice that $\widehat \phi_2$ is a Schwartz function which 
can be chosen arbitrarily. Clearly, we can now choose
$W \in W_{\psi_N}(\pi)$ which does not 
vanish identically on $T_\s$ and then choose
$\phi_2$ so that $I'((\phi_2\circ x)*W, s_0) \ne 0.$ 

This completes the proof in the nonarchimedean case. 
In the archimedean case, the same argument shows
 that the 
mapping 
$(W, f) \mapsto I(W, f; s_0)$ does not vanish identically 
on $\c W_{\psi_N}(\pi) \times \sInd_P^G (\chi_0 + s_0),$
where $\sInd$ denotes smooth induction, as opposed to $K$-finite induction. But since the space of $K$-finite vectors 
is dense in the smooth induced representation, it then 
follows that
$(W, f) \mapsto I(W, f; s_0)$ can not vanish identically on 
 $\c W_{\psi_N}(\pi) \times\Ind_P^G (\chi_0 + s_0)$ 
 either, completing the proof in this case.
 \end{proof}

\section{Application to poles of the adjoint $L$ function}
\begin{thm}\label{thm: main}
Assume that $G$ is split, 
and let $S$ be a finite set of places, including all archimedean places such that 
$\pi_v$ and $\chi_v$ are unramified for $v \notin S.$ Then 
the partial twisted adjoint $L$ function 
$L^S(s, \pi, \Ad \otimes \chi)$ 
has no poles in the half-plane $\Re(s) \ge \frac 12,$ except possibly for a simple pole at 
$\Re(s) =1$ when $\chi$ is nontrivial cubic. 
If this pole is present, then it is inherited by the complete $L$ function  $L(s, \pi, \Ad \times \chi).$
Every other pole of  $L(s, \pi, \Ad \times \chi)$  in $\Re(s)\ge \frac 12$ is a zero of the Hecke $L$ function $L(s, \chi)$, and 
 a pole of $\prod_{v \in S} L_v(s, \pi_v, \Ad\times \chi_v).$  

\end{thm}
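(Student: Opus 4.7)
The approach is to invert the fundamental identity \eqref{global zeta int = L(Ad)/Norm fact x finite prod}, reading it as expressing $L^S(3s-1, \pi, \Ad \otimes \chi)$ as $I(s,\varphi,f) \cdot N(s)/\prod_{v \in S} I(s, W_v, f_v)$, where $N(s) := L^S(3s, \chi)\, L^S(6s-2, \chi^2)\, L^S(9s-3, \chi)$ is the normalizing product. Any hypothetical pole $s_0$ of $L^S(s, \pi, \Ad \otimes \chi)$ with $\Re(s_0) \ge \tfrac12$ translates to an identity at $s_0' := (s_0+1)/3$. The first step is to apply Theorem \ref{thm: nonvanishing of local zetas} at $s_0'$ to pick data $(W_v,f_v)_{v \in S}$ with $\prod_{v \in S} I(s_0', W_v, f_v) \ne 0$; matching global data $(\varphi,f)$ then produces the order equation $\ord_{s_0'} L^S(3s-1, \pi, \Ad \otimes \chi) = \ord_{s_0'} I(s,\varphi,f) + \ord_{s_0'} N(s)$.

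Next I analyze each factor on the right in the region $\Re(s_0') \ge \tfrac12$. Since Hecke $L$-functions of nontrivial characters are entire and $\zeta^S$ has only its simple pole at $s=1$, the product $N(s)$ is holomorphic and nonvanishing there except for a simple pole at $s_0' = \tfrac12$ when $\chi^2$ is trivial, coming from $L^S(6s-2, \chi^2)$. Theorems \ref{thm: poles of EP, chi0 trivial} and \ref{thm: poles in cubic and quadratic case}, together with Proposition \ref{prop: E_Q orth to G cuspforms}, show that $I(s,\varphi,f)$ is holomorphic in $\Re(s) \ge \tfrac12$ with at worst a simple pole at $s = \tfrac23$ when $\chi_0$ is cubic, while Proposition \ref{prop: vanishing at 1/2 when quad} shows that $I$ vanishes at $s = \tfrac12$ whenever $2\chi_0 = 0$. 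A case analysis on $s_0'$ now establishes the first assertion: if $s_0' > \tfrac12$ and $s_0' \ne \tfrac23$, both $I$ and $N$ are finite nonzero so $L^S(3s-1)$ is forced finite; at $s_0' = \tfrac23$ only a simple pole is possible, and only when $\chi$ is cubic; at $s_0' = \tfrac12$ with $\chi^2$ nontrivial both $I$ and $N$ are again finite, while with $\chi^2$ trivial the simple pole of $N$ is exactly offset by the first-order zero of $I$. The trivial-$\chi$ case at $s=1$ (which is formally cubic) is ruled out by the classical identity $L^S(s, \pi, \Ad) = L^S(s, \pi \times \wt\pi)/\zeta^S(s)$ with cancelling simple poles, isolating the stated exception to nontrivial cubic $\chi$.

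For inheritance to the complete $L$-function I observe that the local factors $L_v(s, \pi_v, \Ad \otimes \chi_v)$ have no zeros, being reciprocals of polynomials in $q_v^{-s}$ at finite places and products of Gamma factors at the archimedean ones, so any pole of $L^S$ is preserved in $L = L^S \cdot \prod_{v \in S} L_v$. For the last assertion I use the split-case identity $L(s, \pi, \Ad \otimes \chi) \cdot L(s, \chi) = L(s, \pi \otimes \chi \times \wt\pi)$: in $\Re(s) \ge \tfrac12$ the Rankin-Selberg $L$-function on the right is holomorphic except for a possible simple pole at $s=1$ when $\pi \otimes \chi \cong \pi$, which for $GL_3$ forces $\chi$ to be trivial or nontrivial cubic. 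Outside the stated exception, any pole of $L(s, \pi, \Ad \otimes \chi)$ must therefore arise from a zero of $L(s, \chi)$; since $L^S$ is holomorphic at such a point by the first part and the factors $L_v$ have no zeros, the pole must simultaneously be contributed by a pole of $\prod_{v \in S} L_v$. The principal difficulty in the argument is the delicate balance at $s_0' = \tfrac12$ with $\chi^2 = 1$, where neither the pole bound on $N$ alone nor the holomorphy of $I$ alone suffices, and one must combine the simple pole of $N(s)$ with the precise first-order vanishing of $I(s,\varphi,f)$ from Proposition \ref{prop: vanishing at 1/2 when quad} to effect the needed cancellation.
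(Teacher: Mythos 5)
Your proof follows essentially the same approach as the paper: invert \eqref{global zeta int = L(Ad)/Norm fact x finite prod}, invoke Theorem~\ref{thm: nonvanishing of local zetas} to choose data making the local factors nonvanishing at the point of interest, then combine the pole/vanishing results on $I(s,\varphi,f)$ (Theorems \ref{thm: poles of EP, chi0 trivial}, \ref{thm: poles in cubic and quadratic case}, Propositions \ref{prop: E_Q orth to G cuspforms}, \ref{prop: vanishing at 1/2 when quad}) with non-vanishing of the normalizing product in $\Re(s)\ge\tfrac12$, and finish the last assertion with the identity $L(s,\pi,\Ad\otimes\chi)\cdot L(s,\chi)=L(s,(\pi\otimes\chi)\times\widetilde\pi)$ and Moeglin--Waldspurger. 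One small point in your favor: you explicitly rule out a pole at $s=1$ when $\chi$ is trivial by the cancellation in $L^S(s,\pi\times\widetilde\pi)/\zeta^S(s)$, whereas the paper's proof of the global-zeta-integral theorem only rules out a pole of $I$ at $s=1$ and a \emph{double} pole at $s=\tfrac23$ for trivial $\chi_0$ via \cite{Ginzburg-Jiang}, so the remaining simple pole there must be killed either by the identity you used (which the paper mentions in the remark following the theorem) or by Theorem~\ref{thm: id of unr eis ser}; the paper leaves this step implicit. One nitpick: in your case analysis you write that for $s_0'>\tfrac12$, $s_0'\ne\tfrac23$ both $I$ and $N$ are ``finite nonzero''---nonvanishing of $I$ at a general such point is not established and not needed; finiteness of $I$ and $N$ together with nonvanishing of the finite local product already gives finiteness of $L^S$.
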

\begin{proof}
According to the results which we have 
proved so far, the global 
zeta integral $I(s, \varphi, f)$ has no poles in $\Re(s)\ge \frac 12$
except possibly for a simple pole at $\Re(s) = \frac 23$
which can occur only when $\chi$ is nontrivial cubic. 
By theorem \ref{thm: nonvanishing of local zetas}, 
these properties are inherited by the ratio of partial $L$ functions
$$
\frac{L^S(3s-1, \pi, \Ad' \otimes \chi)}{L^S(3s, \chi) L^S(6s-2, \chi^2) L^S(9s-3, \chi^3)},
$$
and then, since local $L$ functions are nonvanishing meromorphic functions, 
by  
$$
\frac{L^S(3s-1, \pi, \Ad' \otimes \chi)}{L(3s, \chi) L(6s-2, \chi^2) L(9s-3, \chi^3)}.
$$
The product $L(3s, \chi) L(6s-2, \chi^2) L(9s-3, \chi^3)$ has no poles 
in $\Re(s)\ge \frac 12$ except for the simple pole of $L(6s-2, \chi^2)$ 
at $s = \frac 12$ which occurs only if $\chi^2$ is trivial. But 
we have seen that $I(\frac 12 , \varphi, f)=0$ when $\chi$ is 
quadratic. 

This completes the proof of our assertions regarding the partial $L$ function. 
Since local $L$ functions are meromorphic but nonvanishing,  
passing from the partial to the completed $L$ function may introduce additional poles, but
 will not cancel the pole at $1$ 
in the case when it occurs.
On the other hand, it follows immediately from the definitions that $L(s, \pi, \Ad \times \chi) = L(s, (\pi \otimes \chi) \times \wt \pi)/L(s, \chi).$ 
By a result of Moeglin and Waldspurger \cite[Corollaire, p. 667]{MW}, the numerator has at most two simple poles, which occur at $0$ and $1$ and occur if and only if $\pi \cong \pi \otimes \chi$. (Note 
that this forces $\chi$ to be cubic.) Thus, any other poles which appear must be zeros of the denominator.
\end{proof}

\begin{rmk}
The expression  $L(s, \pi, \Ad) = L(s, \pi  \times \wt \pi)/\zeta(s)$
also gives us a shorter proof that $L(s, \pi, \Ad)$ is holomorphic and nonvanishing at $s=1,$ 
without appealing to theorem \ref{thm: id of unr eis ser} and proposition \ref{prop: E_Q orth to G cuspforms}
The functional equations of the global Rankin-Selberg $L$-function and the global 
Dedekind zeta function 
give a functional equation of $L(s, \pi, \Ad).$
By the functional equation, 
the set of poles of $L(s, \pi, \Ad)$ is symmetric as $s \mapsto 1-s.$ 
\end{rmk}
In the nonsplit case, the same argument gives the following result.
\begin{thm}\label{thm: main, nonsplit}
Assume that $G$ is nonsplit, 
and let $S$ be a finite set of places, including all archimedean places such that 
$\pi_v$ and $\chi_v$ are unramified for $v \notin S.$ Then 
the partial twisted  $L$ function 
$L^S(s, \pi, \Ad' \otimes \chi)$ 
has no poles in the half-plane $\Re(s) \ge \frac 12,$ except possibly at 
$s =1.$ 
The pole at $s=1$ can occur only when $\chi$ is trivial, quadratic, or cubic. 
If a pole occurs when $\chi$ is trivial or quadratic, then $\pi$ is $H'_\rho$-distinguished.
The pole at $s=1$ is at most a double pole when $\chi$ 
is trivial, and at most a simple pole when $\chi$ is nontrivial quadratic or cubic.
\end{thm}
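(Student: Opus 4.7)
The plan is to mirror the proof of Theorem \ref{thm: main} (the split case). First I would rearrange \eqref{global zeta int = L(Ad)/Norm fact x finite prod} to isolate the partial $L$-function,
$$L^S(3s-1, \pi, \Ad' \otimes \chi) = L^S(3s, \chi)\, L^S(6s-2, \chi^2)\, L^S(9s-3, \chi) \cdot \frac{I(s, \varphi, f)}{\prod_{v \in S} I(s, W_v, f_v)}.$$
By theorem \ref{thm: nonvanishing of local zetas} and the nonvanishing of local $L$-factors, for any $s_0$ with $\Re(s_0) \ge \tfrac12$ we may choose data so that the denominator is holomorphic and nonzero at $s_0$. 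In the strip $\Re(s) \ge \tfrac12$ the three normalizing $L$-factors in the numerator are holomorphic except for the simple pole of $L^S(6s-2, \chi^2)$ at $s = \tfrac12$ when $\chi^2$ is trivial; in that case proposition \ref{prop: vanishing at 1/2 when quad} gives $I(\tfrac12, \varphi, f) = 0$, cancelling the pole. So the only remaining source of poles is $I(s, \varphi, f)$ itself.

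Next I would locate the poles of $I(s, \varphi, f)$. By theorems \ref{thm: poles of EP, chi0 trivial} and \ref{thm: poles in cubic and quadratic case}, in $\Re(s) \ge \tfrac12$ the Eisenstein series $E_P$ is holomorphic away from $s = \tfrac23$, apart from the simple pole at $s = 1$ in the trivial case whose residue is a constant function and so integrates to zero against the cusp form $\varphi$. At $s = \tfrac23$ the order of pole of $E_P$ is at most $2$ in the trivial case and at most $1$ in the quadratic and cubic cases. Under the substitution $s' = 3s - 1$ this yields all asserted pole orders at $s' = 1$ and holomorphy elsewhere in $\Re(s') \ge \tfrac12$.

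Finally I would establish the $H'_\rho$-distinction claim by passing through the $E_Q$-side via the Siegel--Weil-type identities. When $\chi_0$ is quadratic, theorem \ref{thm: identity, quad case} expresses the residue of $E_P.f$ at $s = \tfrac23$ as the residue of $E_Q.h$ for a flat section $h$ constructed from $f$; pairing $\varphi$ against this and applying proposition \ref{prop: E_Q orth to G cuspforms} factors the result through the $H'_\rho$-period $\c P(\varphi)$, which vanishes unless $\pi$ is $H'_\rho$-distinguished. For the trivial case the same strategy is applied to each of the two leading Laurent coefficients of $E_P$ at $s = \tfrac23$: the first-term identity of \cite{Ginzburg-Jiang} handles one coefficient, and theorem \ref{thm: id of unr eis ser} (the second-term identity) handles the other, both of which then pass through an $H'_\rho$-period and vanish unless $\pi$ is distinguished. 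The main obstacle will be extending theorem \ref{thm: id of unr eis ser}, presently proved only for spherical vectors, to arbitrary flat sections, as is needed to handle ramified places; I expect this to follow by combining the unramified identity with the flat-section bootstrap of proposition \ref{prop: flat sections B to flat sections P} and the principal-series embedding already exploited in the proof of theorem \ref{thm: identity, quad case}.
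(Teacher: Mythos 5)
Your proposal mirrors the paper's approach. The paper gives no detailed proof here, saying only that ``the same argument gives the following result,'' and you fill in the expected details: isolate the partial $L$-function from \eqref{global zeta int = L(Ad)/Norm fact x finite prod}, invoke theorem \ref{thm: nonvanishing of local zetas} to choose data making the finite product nonzero at the point in question, read off pole orders of $E_P$ from theorems \ref{thm: poles of EP, chi0 trivial} and \ref{thm: poles in cubic and quadratic case}, dispose of the pole at $s=1$ of $E_P$ because its residue is constant, cancel the normalizing factor's pole at $s=\frac 12$ via proposition \ref{prop: vanishing at 1/2 when quad}, and tie any surviving pole at $s=\frac 23$ to the $H'_\rho$-period through proposition \ref{prop: E_Q orth to G cuspforms}. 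The pole-order bounds and the quadratic-$\chi$ distinction claim (via theorem \ref{thm: identity, quad case}) are handled correctly.

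The piece you flag as unresolved---establishing distinction in the trivial-$\chi$ case once the leading double-pole coefficient has been killed, so that a residual simple pole at $s=\frac 23$ remains to be dealt with---is a genuine issue, but it is one the paper shares: the proof of the theorem in the subsection on poles of global zeta integrals rules out the pole at $s=1$ and the \emph{double} pole at $s=\frac 23$ via \cite{Ginzburg-Jiang}, yet never explicitly dispatches a residual simple pole for $\chi_0$ trivial, and the Rankin--Selberg shortcut used in the remark after theorem \ref{thm: main} is unavailable in the nonsplit case. Be aware, though, that your proposed repair is not obviously adequate: theorem \ref{thm: id of unr eis ser} is proved by an explicit constant-term calculation special to spherical vectors, and the mechanism of theorem \ref{thm: identity, quad case} does not transfer, since for $\eta$ trivial one has $\la \beta^\vee, [1,-1]\ra = -1$, so $M(w_\beta,[1,-1])$ has a kernel and $M(w_\alpha w_\beta)$ fails to be an isomorphism at $[1,-1]$. (The GJ first-term identity you invoke for the leading coefficient should likewise be checked to extend to ramified sections.) A ramified second-term identity would need to be proved from scratch before this part of the argument is complete.
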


\begin{rmk}
In the nonsplit case we have $L(s, \pi, \Ad' \times \chi) = L(s, \on{sbc}(\pi), \on{Asai} \times \chi)/L(s, \chi),$ where 
$\on{sbc}$ denotes the stable base change lifting from $U_{2,1}$ to $\Res_{E/F}GL_3,$ constructed in \cite{KimKrishOdd},
and $\Asai$ is the Asai representation. 
In the important special case $\chi = \chi_{E/F}$ this becomes
$L(s, \pi , \Ad) = L(s, \sbc(\pi), \Asai \times \chi_{E/F})/L(s, \chi_{E/F}).$
It is proved in  \cite{KimKrishOdd},
that  $L(s, \on{sbc}(\pi), \on{Asai})$ will have a simple pole at $s=1$ if 
$\sbc(\pi)$ is cuspidal, but also that $\sbc(\pi)$ need not be cuspidal.
Arguing as on p. 22 of \cite{GRSBook}, we may deduce that $L(s, \sbc(\pi), \Asai \times \chi_{E/F})$ is holomorphic and nonvanishing 
at $s=1,$ and deduce that $L(s, \pi, \Ad)$ has the same property, still provided that $\sbc(\pi)$ is cuspidal.
 \end{rmk}

\end{document}